\newtheorem{thm}{Theorem} [section]
\newtheorem{cor}[thm]{Corollary}
\newtheorem{lemma}[thm]{Lemma}
\newtheorem*{conj*}{Conjecture}
\newtheorem*{thm*}{Theorem}
\newtheorem*{thmA*}{Theorem A}
\newtheorem*{thmB*}{Theorem B}
\newtheorem*{lemma*}{Lemma}
\newtheorem{defn}[thm]{Definition}
\newcommand{\R}{\mathbb R}
\newcommand{\THS}{\Sigma} 
\newcommand{\oto}{\mathsf{O}(2,1)}
\newcommand{\soto}{{\mathsf{SO}(2,1)^0}}
\newcommand{\iso}{{\mathsf{G}}}
\newcommand{\Aff}{{\mathsf{Aff}}} 
\newcommand{\GLthR}{{\mathsf{GL}(3,\R)}}
\newcommand{\Ht}{{\mathbf H}^2} 
\newcommand{\HS}{{\mathfrak H}} 
\newcommand{\Det}{\mathsf{Det}}
\newcommand{\s}{\mathfrak{s}} 
\newcommand{\EE}{{\mathbf E}}  
\newcommand{\E}{{\EE^3_1}}  
\newcommand{\V}{{\R^3_1}}  
\newcommand{\Hyp}{{\mathsf{Hyp}}} 
\newcommand{\Par}{{\mathsf{Par}}} 
\newcommand{\na}[1]{\tilde{\alpha}_{#1}}
\newcommand{\Fix}{{\mathsf{Fix}}} 
\newcommand{\G}{{\Gamma}} 
\newcommand{\Id}{\mathbb I}
\newcommand{\LL}{\mathsf L} 
\newcommand{\Z}{\mathbb Z}
\renewcommand{\P}{\mathcal P}
\newcommand{\CP}{{\mathcal C}}
\newcommand{\pp}{\P}
\renewcommand{\H}{\mathsf H}
\newcommand{\vx}{{\mathsf x}}
\newcommand{\vy}{{\mathsf y}}
\newcommand{\vu}{{\mathsf u}}
\newcommand{\vv}{{\mathsf v}}
\newcommand{\vw}{{\mathsf w}}
\newcommand{\vo}[1]{{\mathsf x}^0_{#1}}
\newcommand{\vp}[1]{{\mathsf x}^+_{#1}}
\newcommand{\vm}[1]{{\mathsf x}^-_{#1}}
\newcommand{\vpm}[1]{{\mathsf x}^{\pm}_{#1}}
\newcommand{\yp}[1]{{#1}^+ }
\newcommand{\ym}[1]{{#1}^- }
\newcommand{\ypm}[1]{{#1}^{\pm}}
\newcommand{\ldot}[2]{{#1} \cdot {#2}}
\renewcommand{\gg}{\gamma}
\renewcommand{\int}{\mathsf{int}} 
\newcommand{\ZZ}{\mathsf{Z}^1(\G_0,\V)}
\newcommand{\HH}{\H^1(\G_0,\V)}
\newcommand{\trho}{\tilde{\rho}}
\newcommand{\tr}{\mathsf{tr}}
\newcommand{\e}{{\mathsf e}}
\newcommand{\Lo}{{L_{0}}}
\newcommand{\Li}{{L_{\infty}}}
\newcommand{\GLtwR}{{\mathsf{GL}(2,\R)}}
\newcommand{\St}{{\mathsf S}_2}
\newcommand{\Spfr}{{\mathsf{Sp}(4,\R)}}
\newcommand{\Spfz}{{\mathsf{Sp}(4,\Z)}}
\newcommand{\SLtz}{\mathsf{SL}(2,\Z)}
\newcommand{\SLtr}{\mathsf{SL}(2,\R)}
\newcommand{\Lag}{\mathfrak{L}}
\newcommand{\Lf}{\Lag_\infty}
\newcommand{\Hom}{\mathsf{Hom}}
\newcommand{\Aut}{{\mathsf{Aut}}} %
\newcommand{\SAut}{{\mathsf{SAut}}}
\newcommand{\U}{{\mathsf{U}}}
\newcommand\muchbigger{>\hspace{-3pt}>}
\newcommand{\Listar}{L^*_\infty}
\begin{document}

\title[The three-holed sphere]
{Affine deformations of a three-holed sphere}

\author[Charette]{Virginie Charette}
    \address{D\'epartement de math\'ematiques\\ Universit\'e de Sherbrooke\\
    Sherbrooke, Quebec, Canada}
    \email{v.charette@usherbrooke.ca}
\author[Drumm]{Todd A. Drumm}
    \address{Department of Mathematics\\ Howard University\\
    Washington, DC }
    \email{tdrumm@howard.edu}
\author[Goldman]{William M. Goldman}
    \address{Department of Mathematics\\ University of Maryland\\
    College Park, MD 20742 USA}
    \email{wmg@math.umd.edu}

\date{\today}

\subjclass{57M05 (Low-dimensional topology), 20H10 (Fuchsian groups and their
generalizations), 30F60 (Teichm\"uller theory)}
\keywords{
hyperbolic surface, affine manifold, discrete group, fundamental
polygon, fundamental polyhedron, proper action, Lorentz metric,
Fricke space}

\thanks{Charette gratefully acknowledges partial support from the Natural Sciences and Engineering Research Council of Canada and from the Fonds qu\'eb\'ecois de la recherche sur la nature et les technologies. Goldman gratefully
acknowledges partial support from National Science Foundation
grants DMS070781 and the Oswald Veblen Fund at the Institute
for Advanced Study.}

\begin{abstract}
Associated to every complete affine $3$-manifold $M$ with
nonsolvable fundamental group is a  noncompact hyperbolic surface
$\Sigma$. We classify such complete affine structures when
$\Sigma$ is homeomorphic to a three-holed sphere. In particular,
for every such complete hyperbolic surface $\Sigma$, the
deformation space identifies with two opposite octants in $\R^3$.
Furthermore every $M$ admits a fundamental polyhedron bounded by
crooked planes. Therefore $M$ is homeomorphic to an open solid
handlebody of genus two. As an explicit application of this
theory, we construct proper affine deformations of an arithmetic
Fuchsian group inside $\Spfz$.
\end{abstract}

\maketitle

\tableofcontents
\section*{Introduction}

A {\em complete affine manifold\/} is a quotient
\begin{equation*}
M = \EE/\Gamma
\end{equation*}
where $\EE$ is an affine space and $\Gamma\subset \Aff(\EE)$ is a
discrete group of affine transformations of $\EE$ acting properly
and freely on $\EE$. When $\dim \EE =3$,
Fried-Goldman~\cite{FG} and Mess~\cite{Me} imply that either:
\begin{itemize}
\item $\Gamma$ is solvable, or
\item $\Gamma$ is virtually free.
\end{itemize}
When $\Gamma$ is solvable, $M$ admits a finite covering
homeomorphic to the total space of a fibration composed of points,
circles, annuli and tori. The classification of such structures in
this case is straightforward~\cite{FG}. When $\Gamma$ is virtually
free, the classification is considerably more interesting. In the
early 1980s Margulis discovered~\cite{Margulis1,Margulis2} the
existence of such structures, answering a question posed by
Milnor~\cite{Mi}.

\begin{conj*}
Suppose $M^3$ is a $3$-dimensional complete affine manifold
with free fundamental group. Then $M$ is homeomorphic to an open solid handlebody.
\end{conj*}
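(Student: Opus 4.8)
The plan is to realize $M = \EE/\Gamma$ as the quotient of a fundamental domain bounded by \emph{crooked planes}, and then to read the topology of $M$ off the face-pairing combinatorics of this polyhedron. The first step is the rigidity that underlies the problem: since $\Gamma$ acts properly and freely on the affine space $\EE$ and is free (so that, once its rank exceeds one, it is nonsolvable), the structure theory of Fried--Goldman and Mess forces the linear holonomy $\LL(\Gamma)$, after conjugation, to be a \emph{discrete} subgroup of $\oto$ preserving a parallel Lorentzian structure. This identifies $\EE$ with Minkowski space $\E$, exhibits $\Gamma_0 := \LL(\Gamma)$ as a free Fuchsian group whose quotient is a complete noncompact hyperbolic surface $\Sigma = \Ht/\Gamma_0$, and classifies the affine deformation $\Gamma$ of $\Gamma_0$ by a translational cocycle class in $\HH$. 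The topological question thereby reduces to an equivariant construction in $\E$.

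Next I would build the fundamental polyhedron. Fixing a free basis $\gg_1,\dots,\gg_n$ of $\Gamma$ (with $n=2$ in the three-holed-sphere case), I would attach to each basis element a pair of crooked planes in $\E$ bounding disjoint crooked half-spaces, arranged so that $\gg_i$ carries one plane of the $i$th pair to the other; the region $D$ lying outside all $2n$ half-spaces is the candidate fundamental domain. The crux is the \emph{disjointness} of the $2n$ crooked planes, for which one invokes Drumm's disjointness criteria together with properness of the action --- equivalently, the cocycle must lie in the open cone on which the Margulis invariants have consistent signs. I expect this to be the main obstacle: in full generality it is precisely the problem of producing a crooked fundamental domain for an arbitrary Margulis spacetime, whereas for $\Sigma$ a three-holed sphere it should be checkable directly from the two opposite octants in $\R^3$ that constitute the deformation space.

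Finally I would extract the topology. Once $D$ is established as a fundamental domain for the proper free $\Gamma$-action, $M \cong \EE/\Gamma$ is obtained from $D$ by identifying its $2n$ crooked faces in pairs via $\gg_1,\dots,\gg_n$. The interior of $D$ is an open ball and each crooked plane is a properly embedded, unknotted separating surface, so gluing the faces in $n$ pairs in the Schottky pattern thickens a wedge of $n$ circles: the quotient is an open solid handlebody of genus $n$. Taking $n=2$ yields the genus-two handlebody asserted for the three-holed sphere, and the only step that does not extend verbatim to an arbitrary free group is the construction of the crooked fundamental domain in the second paragraph.
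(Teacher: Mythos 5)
You are proving a statement that the paper itself records only as a \emph{conjecture}: the paper proves it solely in the case where the associated hyperbolic surface $\Sigma$ is a three-holed sphere (Theorem~A), and your proposal does not close the gap for the general case. All of the weight rests on your second paragraph, where you must exhibit $2n$ pairwise disjoint crooked half-spaces realizing a face-pairing for an \emph{arbitrary} proper affine deformation of an \emph{arbitrary} free Fuchsian group; you concede yourself that this step ``does not extend verbatim,'' and it is exactly the paper's second conjecture (existence of crooked fundamental domains), which the paper leaves open outside the three-holed-sphere case. Moreover, your parenthetical reformulation of properness --- that the cocycle lie in the open cone where the Margulis invariants have consistent signs --- is not correct in general. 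Margulis's opposite sign lemma makes consistency of signs \emph{necessary}, but the paper notes that for the one-holed torus the space of proper deformations is a convex domain with fractal boundary cut out by infinitely many inequalities, and that when $\Sigma$ has a cusp one can have $\alpha(\gamma)>0$ for every hyperbolic $\gamma$ while the action fails to be proper (the cusp's generalized invariant being zero). So there is no finite, or even countable, list of sign conditions available to ``check directly'' beyond the three-holed sphere and its conjectured Klein-bottle and projective-plane analogues.

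Even restricted to the three-holed sphere, where your outline can be completed, your configuration is not the paper's. You attach a Schottky pair of crooked planes to each free generator; that is the approach of Jones's thesis, which covers the cone of proper deformations only by a union of regions, one per combinatorial configuration. The paper instead decomposes $\Sigma$ into two ideal triangles and takes the three crooked planes $\CP(\vv_i,p_i)$ whose directions $\vv_i$ come from the geodesic lamination joining the fixed points $\vp{i}$ and $\vp{i+1}$, with vertices $p_i = a_i\vp{i}-b_i\vp{i+1}$; pairwise disjointness is then the asymptotic disjointness criterion of \S4, and the explicit matrix relating $(a_i,b_i)$ to $(\mu_1,\mu_2,\mu_3)$ in \S6 shows that this \emph{single} configuration realizes every positive triple of Margulis invariants, which is what gives the two-octant description of the deformation space and, after the perturbation of Lemma~\ref{lem:kissing1}, the crooked fundamental domain. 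Your final topological step --- the complement of $2n$ disjoint crooked half-spaces is a ball, and pairing its faces by the generators yields an open solid handlebody of genus $n$ --- agrees with the paper's use of Drumm's theorem.
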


\noindent
The purpose of this paper is to prove this conjecture in the first
nontrivial case.

By Fried-Goldman~\cite{FG},
the linear holonomy homomorphism
\begin{equation*}
\Aff(\EE^3) \xrightarrow{\LL} \GLthR
\end{equation*}
 embeds $\Gamma$ as a discrete
subgroup of a subgroup of $\GLthR$ conjugate to the orthogonal
group $\oto$.
Thus
$M$ admits a {\em complete flat Lorentz metric \/}
and is a {\em (geodesically) complete flat Lorentz $3$-manifold.\/}
Thus we henceforth restrict our attention to the case $\EE$ is
a $3$-dimensional {\em Lorentzian affine space\/} $\E$.
A Lorentzian affine space is a simply connected
geodesically complete flat Lorentz $3$-manifold,
and is unique up to isometry.

Furthermore $\LL(\Gamma)$ is a Fuchsian group acting
properly and freely on the hyperbolic plane $\Ht$.
We model $\Ht$ on a component
of the two-sheeted hyperboloid
\begin{equation*}
\{ \vv\in\V \mid \ldot{\vv}{\vv} \,=\, -1 \},
\end{equation*}
or equivalently its projectivization in $\mathsf{P}(\V)$.
(Compare \cite{G}.)
The quotient
\begin{equation*}
\Sigma := \Ht/\LL(\Gamma)
\end{equation*}
is a complete hyperbolic surface homotopy-equivalent to $M$,
naturally associated to the Lorentz manifold $M$.

We prove the above conjecture in the case that the surface $\Sigma$ is
homeomorphic to a three-holed sphere.

Margulis~\cite{Margulis1,Margulis2} discovered proper actions by
bounding (from below) the Euclidean distance that elements of $\Gamma$ displace points.
Our more geometric approach constructs fundamental polyhedra
for affine deformations
in the spirit of Poincar\'e's theorem on fundamental polyhedra for
hyperbolic manifolds.

This approach began with Drumm~\cite{Drumm1}, who constructed
fundamental polyhedra from {\em crooked planes\/} to show that certain
affine deformations $\Gamma$ acts properly on all of $\E$.
A {\em crooked plane\/} is a polyhedron in $\E$ with four infinite faces,
adapted to the invariant Lorentzian geometry of $\E$.
Specifically, representing the
hyperbolic surface $\Sigma$ as an identification space of a
fundamental polygon for the generalized
Schottky group $\LL(\Gamma)\subset \oto$, we construct a
fundamental polyhedron for certain affine deformations $\Gamma$
bounded by crooked planes \cite{Drumm1}. We call such a
fundamental polyhedron a {\em crooked fundamental polyhedron.\/}

\begin{conj*}
Suppose $\dim(\E) = 3$ and $\Gamma\subset\Aff(\E)$ is
a discrete group acting properly on $\E$. Suppose that $\Gamma$ is
not solvable. Then some finite-index subgroup of $\Gamma$ admits a crooked fundamental domain.
\end{conj*}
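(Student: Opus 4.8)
The plan is to reduce the problem to the linear holonomy and then realize a fundamental domain explicitly with crooked planes, in the spirit of Drumm. By Selberg's lemma I may first replace $\Gamma$ by a finite-index torsion-free subgroup; since $\Gamma$ is nonsolvable, Fried--Goldman gives that $\LL(\Gamma)$ is a nonelementary Fuchsian group, so $\Sigma = \Ht/\LL(\Gamma)$ is a noncompact hyperbolic surface with free fundamental group. In the first nontrivial case $\Sigma$ is a three-holed sphere, so $\LL(\Gamma)=\langle\LL(\gamma_1),\LL(\gamma_2)\rangle$ is a generalized Schottky group, free of rank two. Following the excerpt, I would represent $\Sigma$ as an identification space of a geodesic fundamental polygon $P\subset\Ht$, bounded by four geodesics paired in two pairs by $\LL(\gamma_1)$ and $\LL(\gamma_2)$, with the three ends of $P$ accounting for the three holes.

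The next step lifts this combinatorial picture to $\E$. To each generator $\gamma_i$ I would attach a pair of crooked planes $C_i^{\pm}$, assembled from the attracting and repelling null directions of the linear part $\LL(\gamma_i)$ together with the translational part of $\gamma_i$; the affine data are recorded by the Margulis invariant $\alpha(\gamma)$, the signed Lorentzian displacement along the invariant line. One arranges that $\gamma_i$ carries the crooked halfspace bounded by $C_i^-$ onto the exterior of the crooked halfspace bounded by $C_i^+$, mirroring the action of $\LL(\gamma_i)$ on the paired geodesics of $P$. The locus of admissible translational parameters is cut out by sign conditions on the invariants $\alpha(\gamma)$, and this is what produces the two opposite octants in $\R^3$ of the abstract.

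The crux, and the step I expect to be the main obstacle, is to show that for every \emph{proper} affine deformation the four crooked planes $C_1^{\pm},C_2^{\pm}$ can be chosen pairwise disjoint. I would invoke Drumm's crooked-plane disjointness criterion, which converts disjointness into inequalities relating the null directions to the translation vectors. The difficulty is that properness is a statement about \emph{all} infinitely many elements of $\Gamma$ --- via Margulis's opposite-sign phenomenon the invariants $\alpha(\gamma)$ must share a consistent sign --- whereas the crooked configuration uses only the two generators; one must show that the two-generator disjointness condition already forces, and is forced by, properness of the whole group. Controlling the infinitely many words in $\gamma_1,\gamma_2$ with only generator data is the technical heart of the argument.

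Once disjointness holds, a crooked analogue of the Poincar\'e polyhedron theorem (Drumm--Goldman) finishes the proof: the region $\Delta\subset\E$ bounded by $C_1^{\pm},C_2^{\pm}$ is a fundamental domain for $\Gamma$, with the side-pairings $\gamma_1,\gamma_2$ the only identifications. Since $\Delta$ is an open cell whose crooked faces are glued in two pairs, the quotient $M=\E/\Gamma$ is an open solid handlebody of genus two, and $\Delta$ is the desired crooked fundamental domain. For a general nonsolvable $\Gamma$ one passes to a finite-index torsion-free subgroup whose associated surface presents this free, rank-two (three-holed sphere) picture before running the same construction; the freedom to pass to finite index is exactly what lets one bypass torsion and any combinatorial obstruction to a crooked domain for the full group, which is why the statement is phrased for a finite-index subgroup.
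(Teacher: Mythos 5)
There is a genuine gap, and you have in fact flagged it yourself: the step you call ``the main obstacle'' --- showing that the crooked planes attached to the generators can be made pairwise disjoint for \emph{every} proper affine deformation --- is exactly the content of the theorem, and your proposal leaves it unresolved. The logical architecture also needs reversing. One does not deduce disjointness from properness by ``controlling the infinitely many words in $\gamma_1,\gamma_2$''; rather, one (a) \emph{constructs} a configuration of pairwise disjoint crooked planes for every class $[u]\in\HH$ with $\mu_1,\mu_2,\mu_3$ of the same sign, which by Drumm's Poincar\'e-type theorem (Theorem~\ref{thm:disjointCPs}) yields a crooked fundamental domain and hence properness, and (b) invokes Margulis's opposite-sign lemma (Lemma~\ref{lem:opposite}) applied only to the three boundary elements $g_1,g_2,g_3=(g_1g_2)^{-1}$ to rule out properness when the signs disagree. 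Properness of the whole group is never an input to the disjointness argument; it is an output.

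The more serious defect is your choice of configuration. Hanging four crooked planes over the four sides of a standard Schottky polygon for $\langle\LL(\gamma_1),\LL(\gamma_2)\rangle$ is precisely Jones's approach, and a single such configuration does \emph{not} realize the entire octant of same-sign Margulis invariants: Jones had to exhaust the cone by a union of many combinatorial configurations. The paper's key idea is a different decomposition. One takes the three geodesics of $\Ht$ joining the attracting ideal points $\vp{i}$ to $\vp{i+1}$ (cutting $\Sigma$ into two ideal triangles), with direction vectors
\begin{equation*}
\vv_i \;=\; \frac{-1}{\ldot{\vp{i}}{\vp{i+1}}}\,\vp{i}\boxtimes\vp{i+1},
\end{equation*}
and places the vertices at $p_i = a_i\vp{i}-b_i\vp{i+1}$ with $a_i,b_i>0$. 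These three crooked planes are pairwise \emph{asymptotic}, and Theorem~\ref{thm:AsCPCP} shows they are disjoint for \emph{all} positive $a_i,b_i$; the map $(a_i,b_i)\mapsto(\mu_1,\mu_2,\mu_3)$ is then shown to surject onto the positive octant, after which Lemma~\ref{lem:kissing1} and Theorem~\ref{thm:disjointCPs} assemble the four bounding crooked planes of the fundamental domain. A telltale consequence, invisible from your Schottky picture, is that the resulting domain projects onto only the convex core of $\Sigma$ in $\Ht$ even though its $\Gamma$-translates tile all of $\E$. Finally, note that the statement is a conjecture of which the paper proves only the three-holed-sphere case, so your silent restriction to that case matches the paper's scope; but within that scope your text is a program whose central constructions are not carried out.
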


\noindent
We prove this conjecture when $\Sigma$ is homeomorphic to a three-holed sphere.

Let $\Gamma_0\subset \oto$ be a Fuchsian group. Denote the corresponding embedding
\begin{equation*}
\rho_0: \Gamma_0 \hookrightarrow \oto  \subset \GLthR.
\end{equation*}
An {\em affine deformation\/} of $\Gamma_0$ is a representation
\begin{equation*}
\Gamma_0 \xrightarrow{\rho} \Aff(\E)
\end{equation*}
satisfying $L\circ\rho = \rho_0$.
We refer to the image $\Gamma$ of $\rho$ as an
{\em affine deformation\/} as well.

An  affine deformation is {\em proper\/} if the
affine action of $\Gamma_0$ on $\E$ defined by $\rho$ is
a proper action. Clearly an affine deformation $\Gamma$
which admits a crooked fundamental polyhedron is proper.

\begin{thm*}[Drumm]
Every free discrete Fuchsian group $\Gamma_0\subset \oto$ admits a
proper affine deformation.
\end{thm*}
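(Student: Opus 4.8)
The plan is to construct a crooked fundamental polyhedron for a suitable affine deformation of $\Gamma_0$ and then invoke the principle, already recorded above, that any affine deformation admitting a crooked fundamental polyhedron is proper. The starting point is the Schottky structure of $\Gamma_0$. Being free and discrete, $\Gamma_0$ is a generalized Schottky group: fixing a free generating set $g_1,\dots,g_n$, there is a fundamental polygon for the action on $\Ht$ bounded by $2n$ pairwise disjoint geodesics (allowing tangency at infinity when $\Gamma_0$ has parabolic elements), grouped into $n$ pairs so that $\rho_0(g_i)$ carries one geodesic of the $i$-th pair to the other and maps the half-plane bounded by the first off the interior of the half-plane bounded by the second. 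This is the hyperbolic ping-pong configuration underlying the Klein combination theorem.

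First I would attach to each of the $2n$ bounding geodesics a crooked plane in $\E$. Each geodesic $\ell$ determines a spacelike direction $\vv$ in $\V$, namely its pole (the spacelike line orthogonal to the timelike plane containing $\ell$), and this direction serves as the directing vector of a crooked plane; the remaining datum is the \emph{center}, a point of $\E$ which we are free to prescribe. Each crooked plane bounds two crooked half-spaces. Since $\rho_0(g_i)\in\oto$ preserves the Lorentz form, it sends the pole of one geodesic of the $i$-th pair to the pole of the other, so the linear holonomy already matches the crooked planes of a pair up to their centers; the translational part $u(g_i)$ of the affine deformation $\rho$ then moves these centers, and this is precisely the freedom we exploit.

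Next I would invoke the disjointness criterion for crooked planes (Drumm; Drumm--Goldman): two crooked planes whose directing vectors are the poles of disjoint geodesics of $\Ht$ become disjoint once their centers are translated sufficiently far apart along a spacelike direction. Applying this to all pairs simultaneously, I would choose the translations $u(g_i)$ large enough that the $2n$ crooked planes are \emph{pairwise disjoint}. Their crooked half-spaces are then pairwise disjoint, and their common exterior $D\subset\E$ is a region bounded by the $2n$ crooked planes, with $\rho(g_i)$ carrying the crooked plane over one side of the $i$-th pair to the crooked plane over the other and taking the exterior of the first crooked half-space onto the interior of the second.

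Finally I would run the crooked analogue of Poincar\'e's polyhedron theorem: a ping-pong argument on the crooked half-spaces shows that every nontrivial reduced word in $\rho(g_1),\dots,\rho(g_n)$ carries $D$ off itself, so $D$ tiles $\E$ under $\Gamma=\rho(\Gamma_0)$ and is a crooked fundamental polyhedron. Properness of the affine deformation then follows at once from the principle quoted above. The main obstacle is the disjointness step: verifying, in the Lorentzian geometry of $\E$, the precise conditions under which two crooked planes are disjoint, and confirming that the translational freedom in the cocycle $u$ always suffices to separate all $2n$ of them at once. This geometric analysis of crooked planes is the technical core of the argument, and is where essentially all of Drumm's original work is concentrated.
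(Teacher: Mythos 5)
Your proposal is correct and follows essentially the same route as the paper, which does not reprove this theorem but attributes it to Drumm's construction of crooked fundamental polyhedra for generalized Schottky groups (\cite{Drumm1,Drumm2}) --- precisely the Schottky ping-pong configuration, disjointness criteria for crooked planes, and crooked Poincar\'e-type theorem that you outline, and that the paper itself develops in its Sections 3--5 (e.g.\ Theorem~\ref{thm:UltraCPCP}, Theorem~\ref{thm:AsCPCP} and Theorem~\ref{thm:disjointCPs}). You also correctly locate the technical core in the disjointness analysis, where the displacement of the vertices must lie in an explicit open cone of ``allowable translations'' rather than merely being large.
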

\noindent Actions of free groups by Lorentz isometries are the
only cases to consider.  Fried-Goldman \cite{FG} reduces the
problem to when $\Gamma_0$ is a Fuchsian group, and Mess~\cite{Me}
implies $\Gamma_0$ cannot be cocompact. Thus, after passing to a
finite-index subgroup, we may assume that $\Gamma_0$ is free.

The linear representation $\rho_0$ is itself an affine deformation,
by composing it with the embedding
\begin{equation*}
\GLthR \hookrightarrow \Aff(\E).
\end{equation*}
Slightly abusing notation, denote this composition by $\rho_0$ as
well.  Two affine deformations are {\em translationally
equivalent\/} if they are conjugate by a translation in $\E$. An
affine deformation is {\em trivial\/} (or {\em radiant\/}) if and
only if it is translationally conjugate to the affine deformation
$\rho_0$ constructed above. Equivalently, an affine deformation is
trivial if it fixes a point in the affine space $\E$.

Let  $\V$ denote the vector space underlying the affine space $\E$, considered as a
$\Gamma_0$-module via the linear representation $\rho_0$.
The space of translational equivalence classes of affine deformations
of $\rho_0$ identifies with the cohomology group $\HH$.
For each $g\in\Gamma_0$,
define
the {\em translational part\/}
$u(g)$ of $\rho(g)$,
as the unique translation taking the origin
to its image under $\rho(g)$.
That is, $u(g) = \rho(g) (0)$, and
\begin{equation*}
x \stackrel{\rho(g)}\longmapsto \rho_0(g)(x) + u(g).
\end{equation*}
The map $\Gamma_0\xrightarrow{u}\V$ is a cocycle in $\ZZ$,
and conjugating $\rho$ by a translation changes $u$ by
a coboundary.

The classification of complete affine structures in dimension $3$
therefore reduces to determining, for a given free Fuchsian group
$\Gamma_0$, the subset of $\HH$ corresponding to translational
equivalence classes of {\em proper\/}  affine deformations.

Margulis~\cite{Margulis1,Margulis2} introduced an invariant of the
affine deformation $\Gamma$, defined for elements $\gamma\in\Gamma$
whose linear part $\LL(\gamma)$ is hyperbolic.
Namely, $\gamma$ preserves a unique affine line $C_\gamma$ upon which
it acts by translation.  Furthermore $C_\gamma$ inherits a
{\em  canonical orientation.\/} As $C_\gamma$ is spacelike, the Lorentz
metric and the canonical orientation determines a unique
orientation-preserving isometry
\begin{equation*}
\R\xrightarrow{j_\gamma}  C_\gamma.
\end{equation*}
\noindent
The {\em Margulis invariant\/} $\alpha(\gamma)\in\R$ is
the displacement of the translation $\gamma|_{C_\gamma}$ as measured by $j_\gamma$:
\begin{equation*}
j_\gamma(t) \xrightarrow{\gamma}  j_\gamma(t + \alpha(\gamma) \big)
\end{equation*}
for $t\in\R$.

Margulis's invariant $\alpha$ is a class function on $\Gamma_0$ which
completely determines the translational equivalence class of the
affine deformation \cite{DrummGoldman2, CharetteDrumm2}.
Charette and Drumm~\cite{CharetteDrumm1} extended Margulis's invariant to
parabolic transformations.  However, only its {\em sign\/} is well
defined for parabolic transformations.
To obtain a precise numerical value one requires a {\em decoration\/} of $\Gamma_0$,
that is, a choice of horocycle at each cusp of $\Sigma$.

If $\Gamma$ is an affine deformation of $\Gamma_0$ with translational
part $u\in\ZZ$, then we indicate the dependence of $\alpha$ on the
cohomology class $[u]\in\HH$ by writing $\alpha = \alpha_{[u]}$.

Let $\Gamma_0$  be a Fuchsian group whose corresponding hyperbolic surface
$\Sigma$ is homeomorphic to a three-holed sphere.
Denote the generators of $\Gamma_0$ corresponding to the three ends
of $\partial\Sigma$ by $g_1, g_2, g_3$.
Choose a decoration  so that the generalized Margulis invariant
defines an isomorphism
\begin{align*}
\HH &\longrightarrow \R^3 \\
[u] &\longmapsto
\bmatrix \mu_1([u]) \\ \mu_2([u]) \\ \mu_3([u]) \endbmatrix :=
\bmatrix
\alpha_{[u]}(g_1) \\ \alpha_{[u]}(g_2) \\ \alpha_{[u]}(g_3)
\endbmatrix.
\end{align*}

\begin{thmA*}\label{thm:main}
Let $\Gamma_0,\Sigma_0,\mu_1,\mu_2,\mu_3$ be as above.
Then $[u]\in\HH$
corresponds to a proper affine
deformation if and only if
\begin{equation*}
\mu_1([u]),\;  \mu_2([u]),\;  \mu_3([u])
\end{equation*}
all have the same sign.
Furthermore in this case $\Gamma$ admits a crooked
fundamental domain and $M$ is homeomorphic to an open solid
handlebody of genus two.
\end{thmA*}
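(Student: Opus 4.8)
The plan is to prove the two implications separately. Because the Margulis invariant $\alpha_{[u]}(g)$ is the pairing of the translational part $u(g)$ with the neutral (spacelike) eigendirection of the fixed linear part $\rho_0(g)$, it depends linearly on $[u]\in\HH$; in particular replacing $[u]$ by $-[u]$ negates $\mu_1,\mu_2,\mu_3$ simultaneously. Thus the ``same sign'' locus is exactly the union of the two opposite octants $\{\mu_i>0\}$ and $\{\mu_i<0\}$, and it suffices to treat the positive octant. For \textbf{necessity}, suppose $[u]$ is proper. A hyperbolic $g_i$ with $\mu_i=0$ would fix its invariant line pointwise, producing infinite point-stabilizers and contradicting properness, and the parabolic case is handled by the generalized invariant of \cite{CharetteDrumm1}; hence each $\mu_i\neq 0$. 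By Margulis's sign obstruction (the Opposite Sign Lemma \cite{Margulis1, Margulis2}, extended to parabolics in \cite{CharetteDrumm1}), a proper affine deformation cannot contain two elements whose invariants have strictly opposite signs. Applied to $g_1,g_2,g_3$, this forces $\mu_1,\mu_2,\mu_3$ to share a common nonzero sign.

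The real content is \textbf{sufficiency}: assume $\mu_1,\mu_2,\mu_3>0$ and produce a crooked fundamental domain. First I would fix a fundamental polygon $P_0\subset\Ht$ for $\Gamma_0$ adapted to the three-holed sphere, namely a Schottky-type region whose sides are geodesics paired by the free generators $g_1,g_2$, the third boundary being represented by $g_3=(g_1g_2)^{-1}$. To each paired side I associate a crooked plane $\CP\subset\E$ whose stem $\Stem$ lies over the corresponding geodesic and whose wings $\Wingp,\Wingm$ are the null half-planes determined by the fixed null directions of the linear part. The vertex of each crooked plane is translated according to the cocycle $u$, so that the bounding crooked half-spaces $\HSE$ depend on $[u]$; the crooked planes then bound a polyhedron $P\subset\E$ lying over $P_0$.

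The heart of the matter is to show that these crooked planes are pairwise disjoint and correctly matched: that each $g_i$ carries the crooked plane on one side of $P$ onto its partner while mapping the complementary crooked half-space into the interior, so that the ping-pong hypotheses of the crooked analogue of Poincar\'e's polyhedron theorem \cite{Drumm1} are satisfied. Here one invokes the disjointness criterion for crooked half-spaces of Drumm and Goldman, expressed through the relative position of the vertices and the null frames. Since the vertex displacement produced by $g_i$ is governed by $u(g_i)$, whose neutral component is exactly $\mu_i=\alpha_{[u]}(g_i)$, positivity of the three invariants is precisely the condition that each generator displaces its crooked plane in the direction needed for disjointness. I expect this to be the main obstacle, because the fundamental domain encodes the geometry of all the side-pairings whereas the hypothesis constrains only the three boundary invariants. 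I would resolve it by using the relation $g_1g_2g_3=1$ to reduce the cross-disjointness inequalities between the two generator pairs to the third invariant $\mu_3$, and by exploiting the freedom in the decoration --- the choice of horocycles, equivalently of crooked-plane vertices --- to normalize the whole configuration into disjoint position whenever $\mu_1,\mu_2,\mu_3>0$.

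Granting disjointness and the correct pairing, the crooked Poincar\'e theorem of \cite{Drumm1} identifies $P$ as a fundamental domain for the action of $\Gamma$ on $\E$; in particular the action is proper, which is the ``if'' direction together with the existence of a crooked fundamental domain. Finally, $P$ is homeomorphic to a closed ball, and the two side-pairings $g_1,g_2$ glue it along two pairs of crooked faces. This is exactly the gluing pattern of a genus-two handlebody, consistent with the rank-two free fundamental group of the three-holed sphere, so $M=\E/\Gamma$ is homeomorphic to an open solid handlebody of genus two, completing the argument.
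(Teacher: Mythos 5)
Your overall architecture (Opposite Sign Lemma for necessity, disjoint crooked planes plus the crooked Poincar\'e theorem for sufficiency, and the gluing count for the handlebody statement) matches the paper's, but the sufficiency direction as you have set it up contains a genuine gap, and it is located exactly where you yourself flag ``the main obstacle.'' You start from a Schottky-type fundamental polygon in $\Ht$ whose sides are paired by the free generators, and then assert that positivity of $\mu_1,\mu_2,\mu_3$ is ``precisely the condition'' that the resulting crooked planes can be displaced into disjoint position. This is not proved, and for a single Schottky configuration it is not true: the paper points out that Jones's thesis, which follows exactly this route, must cover the open octant by a \emph{union} of regions corresponding to different combinatorial configurations of crooked planes, with no single Schottky configuration realizing every positive triple. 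The paper's key move, which your proposal does not contain, is to abandon the side-paired polygon and instead use the ideal-triangle lamination of $\Sigma$: the three arcs joining the attracting fixed points $\vp{i}$ to $\vp{i+1}$, with associated direction vectors $\vv_i$ proportional to $\vp{i}\boxtimes\vp{i+1}$ and vertices of the explicit form $p_i=a_i\vp{i}-b_i\vp{i+1}$ with $a_i,b_i>0$. For these the asymptotic disjointness criterion is verified by a direct computation valid for \emph{all} positive $a_i,b_i$, and the resulting stems project to lines in $\Ht$ bounding only the convex core, not a fundamental polygon for $\LL(\Gamma)$ acting on all of $\Ht$ --- so the object you would build is not even the one the paper builds.

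Two further points need attention. First, even granting a disjoint configuration for each positive triple, you must show that \emph{every} class $[u]$ in the positive octant is realized by some such configuration; this requires knowing that $[u]\mapsto(\mu_1([u]),\mu_2([u]),\mu_3([u]))$ is an isomorphism $\HH\to\R^3$ (the paper's Lemma~\ref{lem:muiso}, proved in the appendix) together with the explicit surjectivity of the linear map $(a_i,b_i)\mapsto(\mu_1,\mu_2,\mu_3)$ onto the open octant, which the paper writes out as a $3\times 6$ matrix with positive entries $\beta_{i,j}=-\vp{i}\cdot\vo{j}$. Your appeal to ``freedom in the decoration'' gestures at this but does not supply it. Second, in the necessity direction, the case where some $\mu_i=0$ with $g_i$ parabolic is not covered by your fixed-point argument (a parabolic with vanishing generalized invariant need not fix a point); this case has to be referred to the parabolic extension of the Margulis invariant in the cited work of Charette--Drumm, as the paper does. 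The handlebody conclusion at the end is fine once a crooked fundamental domain bounded by four pairwise disjoint crooked planes is in hand.
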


For purely hyperbolic $\Gamma_0$,
Theorem~A
was proved by Cathy Jones in her doctoral thesis~\cite{Jones},
using a different method.

In the case that $\Sigma$ is a three-holed sphere, Theorem~A gives
a complete description of the deformation space and the
topological type. As three-holed spheres are the building blocks
of all compact hyperbolic surfaces, the present paper plays a
fundamental role in our investigation of affine deformations of
hyperbolic surfaces of arbitrary topological type. 
We conjecture that when 
$\Sigma$ is homeomorphic to a
two-holed projective plane or one-holed Klein bottle, 
the deformation space will again be defined by finitely
many inequalities.
However, in all other cases,
the deformation space will be
defined by infinitely many inequalities. 
For example, when $\Sigma$ is homeomorphic to a
one-holed torus, the deformation space is a convex domain with
fractal boundary~\cite{GMM}.

Margulis's opposite sign lemma~\cite{Margulis1,Margulis2} (see Abels~\cite{Abels}
for a beautiful exposition) states that uniform positivity (or negativity) of
$\alpha(\gamma)$ is necessary for properness of an affine deformation.
In \cite{GM,G0} uniform positivity was conjectured
to be equivalent to properness. Theorem~A implies this conjecture when $\Sigma$
is a three-holed sphere with geodesic boundary.
In that case only the three $\gamma$ corresponding to $\partial \Sigma$
need to be checked.
However, when $\Sigma$ has at least one cusp, Theorem~A
provides counterexamples to the original conjecture.
If the generalized Margulis invariant of that cusp is zero,
and those of the other ends are positive, then $\alpha(\gamma) > 0$
for all hyperbolic elements $\gamma\in\Gamma$.
Other counterexamples are given in \cite{GMM}.

We apply Theorem~A to construct a proper affine deformation of an
arithmetic group in $\SLtz$ inside $\Spfz$.  Here $\Aff(\E)$ is
represented as the subgroup of $\Spfr$ stabilizing a Lagrangian plane
$\Li$ 
in a symplectic vector space $\R^4$ defined over $\Z$.  
Its unipotent radical $\U$ is the subgroup of
$\Spfr$ which preserves $\Li$, 
acts identically on $\Li$,
and acts identically on the quotient $\R^4/\Li$.
The parabolic subgroup
$\Aff(\E)$ is the normalizer of $U$ in $\Spfr$.
Furthermore
$\Aff(\E)$ acts conformally on a left-invariant flat Lorentz
metric on $U$.
This model of
Minkowski space embeds in the conformal compactification of $\E$, the
{\em Einstein universe\/} (see \cite{BCDGM}) upon which $\Spfr$ acts
transitively.

\begin{thmB*}
Choose three positive integers $\mu_1,\mu_2,\mu_3$.
Let $\Gamma$ be the subgroup of $\Spfz$ generated by
\begin{equation*}
\bmatrix
-1 & -2 & \mu_1 + \mu_2 -\mu_3  & 0 \\
 0 & -1 & 2\mu_1 & -\mu_1 \\
 0 & 0 & -1   &   0 \\
 0 & 0 &  2   &   -1
\endbmatrix ,\;
\bmatrix
-1 &  0 & -\mu_2 & -2\mu_2    \\
 2 & -1 &     0 & 0 \\
 0 & 0  &      -1 & -2   \\
 0 & 0  &      0 & -1
\endbmatrix
\end{equation*}
Let $\U \,:=\, \exp(\Phi) \,\subset\, \Spfr$ 
be the connected unipotent subgroup consisting of matrices
\begin{equation*}
\bmatrix
1 & 0 & x & y \\
0 & 1 & y & z \\
0 & 0 & 1 & 0 \\
0 & 0 & 0 & 1
\endbmatrix
\end{equation*}
where $x,y,z\in\R$. Then:
\begin{itemize}
\item $\Gamma$ normalizes $\U$;
\item The resulting action of $\Gamma$ on $\U$
is proper and free;
\item $\Gamma$ acts isometrically with respect to a left-invariant
flat Lorentz metric on $\U$;
\item The quotient orbifold $\U/\Gamma$ is homeomorphic
to an open solid handlebody of genus two.
\end{itemize}
\end{thmB*}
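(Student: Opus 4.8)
Let $\gamma_1,\gamma_2$ denote the two generators of $\Gamma$ displayed above. The plan is to recognize $\Gamma$ as an affine deformation of a Fuchsian group whose quotient surface is a three-holed sphere, to read off its three generalized Margulis invariants, and to invoke Theorem~A.

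I would first dispose of the first bullet while fixing the dictionary to the affine model. Each generator has vanishing lower-left $2\times2$ block, hence block upper-triangular form with diagonal blocks $A,A^{-T}$ and upper-right block $C$; a one-line check shows $A^{-1}C$ is symmetric in both cases, so each generator lies in the stabilizer $\Aff(\E)$ of the Lagrangian plane $\Li$. As $\U$ is the unipotent radical of this parabolic subgroup it is normal, which is the first bullet. Realizing $\E\cong\U$ as the space of Lagrangians transverse to $\Li$---the graphs $\{(Bw,w)\}$ of symmetric $2\times2$ matrices $B$---one computes that such a block upper-triangular element acts on $\E$ by the affine map $B\mapsto ABA^{T}+CA^{T}$, with linear part $B\mapsto ABA^{T}$ and translational part $u=CA^{T}$. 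Under this identification the invariant Lorentzian form is $\Det(B)$, and this is (a constant multiple of) the left-invariant flat Lorentz metric on $\U$.

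Next I would identify the linear holonomy $\LL(\Gamma)$. Writing $T=\bigl(\begin{smallmatrix}1&1\\0&1\end{smallmatrix}\bigr)$ and $S=\bigl(\begin{smallmatrix}1&0\\-1&1\end{smallmatrix}\bigr)$, the linear parts are $A_1=\LL(\gamma_1)=-T^{2}$ and $A_2=\LL(\gamma_2)=-S^{2}$, both in $\SLtz$. Their images in $\oto\cong\mathsf{PSL}(2,\R)$ freely generate the level-two principal congruence group $\overline{\Gamma(2)}$; hence $\langle A_1,A_2\rangle$ maps isomorphically onto it (in particular $-\Id\notin\langle A_1,A_2\rangle$) and is free of rank two, and since the linear part is injective on an affine deformation, $\Gamma$ is itself free of rank two, hence torsion-free. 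Consequently $\THS=\Ht/\LL(\Gamma)$ is a thrice-punctured sphere, a three-holed sphere with three cusps, with boundary classes $g_1=A_1$, $g_2=A_2$, $g_3=(A_1A_2)^{-1}$, all parabolic; this is precisely the setting of Theorem~A. The third bullet now follows: each $A_i\in\SLtr$, so $B\mapsto A_iBA_i^{T}$ preserves $\Det(B)$ and the translations are isometries of the flat metric, whence $\Gamma$ acts isometrically.

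Finally I would compute the invariants. A short computation gives the cocycle values $u(\gamma_1)=\operatorname{diag}(\mu_3-\mu_1-\mu_2,\mu_1)$ and $u(\gamma_2)=\operatorname{diag}(\mu_2,0)$, hence $u(\gamma_1\gamma_2)=u(\gamma_1)+A_1u(\gamma_2)A_1^{T}=\operatorname{diag}(\mu_3-\mu_1,\mu_1)$. The null direction fixed by the linear part of each $g_i$ is $vv^{T}$ for the corresponding parabolic fixed vector $v$, namely $\bigl(\begin{smallmatrix}1&0\\0&0\end{smallmatrix}\bigr)$, $\bigl(\begin{smallmatrix}0&0\\0&1\end{smallmatrix}\bigr)$ and $\bigl(\begin{smallmatrix}1&1\\1&1\end{smallmatrix}\bigr)$. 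Pairing the cocycle values against these null vectors under the polarization of $\Det$ yields $\tfrac12\mu_1,\tfrac12\mu_2,\tfrac12\mu_3$; so, after the normalization fixed by the chosen decoration, the three generalized Margulis invariants $\mu_i([u])=\alpha_{[u]}(g_i)$ are positive multiples of $\mu_1,\mu_2,\mu_3$ and in particular share a common sign. Theorem~A then yields properness of the action on $\E\cong\U$, a crooked fundamental domain, and the homeomorphism of $M=\U/\Gamma$ with an open genus-two handlebody (the fourth bullet); and a proper action of a torsion-free discrete group is free (completing the second bullet). The main obstacle is carrying out this last step rigorously: one must push the decorated parabolic invariant of \cite{CharetteDrumm1} through the symplectic/Lagrangian model and track the orientation of each fixed null direction consistently, so that the common sign is genuinely positive---and, for the sharper assertion, so that the decoration returns the integers $\mu_i$ on the nose. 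Only the common sign, however, is needed to apply Theorem~A.
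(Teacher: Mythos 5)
Your proposal is correct and follows essentially the same route as the paper: identify the linear holonomy with the level-two congruence subgroup of $\SLtz$ uniformizing the thrice-punctured sphere, extract the translational parts $u(\gamma_i)=CA^{\mathsf T}$ in the symmetric-matrix model of $\V$ (the paper runs the same computation in the forward direction, building $\gamma_i=U_{u_i}\sigma(g_i)$ and specializing to the slice $b_1=b_2=c_2=0$), compute the three parabolic Margulis invariants, and invoke Theorem~A. The one discrepancy is a sign/normalization: the Lorentzian form here is $-\det$, so pairing against your null vectors $v_iv_i^{\mathsf T}$ gives $-\tfrac12\mu_i$ rather than $+\tfrac12\mu_i$ (the paper's decoration takes $\vv_i=-2\,v_iv_i^{\mathsf T}$, returning exactly $\mu_i$), but since all three invariants share a common sign either way, Theorem~A applies and your argument stands as you yourself anticipate.
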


Our result complements 
Goldman-La\-bourie-Mar\-gulis~\cite{GLM}
when the hyperbolic surface $\Sigma$ is convex cocompact.
In that case the space of proper affine deformations identifies with
an open convex cone in $\HH$ defined by the
nonvanishing of an extension of  Margulis's invariant to
geodesic currents on $\Sigma$.

This cone is the interior of the intersection of half-spaces defined
by the functionals
\begin{align*}
\HH & \longrightarrow \R \\
[u] &\longmapsto \alpha_{[u]} (g)
\end{align*}
for $g\in\Gamma_0$.
In general we expect this cone to be the {\em union\/} of open regions corresponding
to combinatorial configurations realized by crooked planes,
thereby giving a crooked fundamental domain for each proper affine deformation.
Jones~\cite{Jones} used standard {\em Schottky fundamental domains\/} to fill the
open cone with such regions.
Here we decompose $\Sigma$ into two ideal triangles,
obtaining a single combinatorial configuration which applies to all proper affine deformations.

We are grateful to Ian Agol, Francis Bonahon, Dick Canary, David
Gabai, Ryan Hoban, Cathy Jones, Fran\c cois Labourie, Misha Kapovich,
Grisha Margulis, Yair Minsky and Kevin Scannell for helpful
discussions.  We also wish to thank the Institute for Advanced Study
for their hospitality.

\section{Lorentzian geometry}\label{sec:alpha}

This section summarizes needed technical background on the geometry of Minkowski
(2+1)-spacetime, its isometries and Margulis's invariant
of hyperbolic and parabolic isometries.
For details, variants and proofs, see
\cite{Abels,CharetteDrumm1,CharetteDrumm2,
CDGM,Drumm3,DGyellow,DrummGoldman2,G0}.

Let $\E$ denote {\em Minkowski (2+1)-spacetime}, that is, a simply
connected complete three-dimensional flat Lorentzian
manifold. Alternatively $\E$ is an affine space whose underlying
vector space $\V$ of translations is a {\em Lorentzian inner vector space,\/}
a vector space with an inner product
\begin{align*}
\V \times \V &\longrightarrow \R \\
(\vv,\vw) &\longmapsto \vv\cdot\vw
\end{align*}
of signature $(2,1)$.

A vector $\vx\in\V$ is:
\begin{itemize}
\item {\em null} if $\ldot{\vx}{\vx}=0$;
\item {\em timelike} if $\ldot{\vx}{\vx}<0$;
\item {\em spacelike} if $\ldot{\vx}{\vx}<0$.
\end{itemize}
A spacelike vector $\vx$ is {\em unit spacelike} if $\ldot{\vx}{\vx}=1$.  
A null vector is {\em future-pointing} if its third coordinate is
positive -- this corresponds to choosing a connected component of
the set of timelike vectors, or a {\em time-orientation}.

Define the {\em Lorentzian cross-product} as follows.
Choose an orientation on $\V$.
Let
\begin{align*}
\V \times \V \times \V \xrightarrow{\Det} \R
\end{align*}
denote the alternating trilinear form compatible
with the Lorentzian inner product and the orientation:
if $(\vv_1,\vv_2,\vv_3)$ is a positively-oriented
basis, with
\begin{equation*}
\ldot{ \vv_i}{\vv_j} \,=\, 0 {\text{~if~}} i \neq j, \;
\ldot{\vv_1}{\vv_1} \,=\, \ldot{\vv_2}{\vv_2} \,=\, -\ldot{\vv_3}{\vv_3} \,=\, 1
\end{equation*}
then
\begin{equation*}
\Det(\vv_1,\vv_2,\vv_3) = 1.
\end{equation*}
The Lorentzian cross-product is the unique bilinear map
\begin{equation*}
\V\times\V\xrightarrow{\boxtimes} \V
\end{equation*}
satisfying
\begin{equation*}
\ldot{\vu}({\vv\boxtimes\vw}) \;=\;
\mbox{Det}(\left[\vu~ \vv~\vw\right]).
\end{equation*}
The following facts are well known
(see for example Ratcliffe~\cite{Ratcliffe}):
\begin{lemma}\label{lem:crossprod}
Let $\vu,\vv,\vx,\vy\in\V$.  Then:
\begin{align*}
\ldot{\vu}{(\vx\boxtimes\vy)} & = \ldot{\vx}{(\vy\boxtimes\vu)} \\
\ldot{(\vu\boxtimes\vv)}{(\vx\boxtimes\vy)} &=
(\ldot{\vu}{\vy})(\ldot{\vv}{\vx})-(\ldot{\vu}{\vx})(\ldot{\vv}{\vy}).
\end{align*}
\end{lemma}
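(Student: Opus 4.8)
The plan is to deduce both identities directly from the defining relation $\ldot{\vu}{(\vv\boxtimes\vw)} = \Det(\vu,\vv,\vw)$, exploiting that $\Det$ is alternating.

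For the first identity I would simply observe that $\ldot{\vu}{(\vx\boxtimes\vy)} = \Det(\vu,\vx,\vy)$ while $\ldot{\vx}{(\vy\boxtimes\vu)} = \Det(\vx,\vy,\vu)$. The two argument triples differ by the cyclic permutation $(\vu,\vx,\vy)\mapsto(\vx,\vy,\vu)$, which is even, so the alternating form $\Det$ takes the same value on both; this gives the equality with no computation. In fact this shows that $\ldot{\vu}{(\vx\boxtimes\vy)}$ is invariant under all cyclic permutations of $\vu,\vx,\vy$, a fact I would record for reuse.

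For the second identity I would first note that both sides are multilinear in $(\vu,\vv,\vx,\vy)$, are alternating under $\vu\leftrightarrow\vv$ and under $\vx\leftrightarrow\vy$, and are symmetric under interchanging the ordered pair $(\vu,\vv)$ with $(\vx,\vy)$. By multilinearity it therefore suffices to verify the identity on the positively-oriented orthonormal basis $(\vv_1,\vv_2,\vv_3)$ fixed in the definition, and the alternating and symmetry properties cut the check down to the few cases in which $\vu,\vv$ and $\vx,\vy$ are distinct basis vectors. The computational input is the multiplication table, read off from the definition by pairing $\vv_i\boxtimes\vv_j$ against each basis vector: $\vv_1\boxtimes\vv_2 = -\vv_3$, $\vv_2\boxtimes\vv_3 = \vv_1$, and $\vv_3\boxtimes\vv_1 = \vv_2$. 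Substituting these into the surviving cases finishes the proof. Alternatively, combining the cyclic invariance from the first part with the Lorentzian triple-product expansion $\vu\boxtimes(\vv\boxtimes\vw) = (\ldot{\vu}{\vv})\vw - (\ldot{\vu}{\vw})\vv$ yields the identity directly and bypasses the case analysis, though establishing that expansion requires the same multiplication table.

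The one place demanding care, and the only real obstacle, is the sign bookkeeping forced by the $(2,1)$ signature. Because $\ldot{\vv_3}{\vv_3} = -1$, pairing against the timelike vector $\vv_3$ flips a sign; this is exactly what produces the lone minus in $\vv_1\boxtimes\vv_2 = -\vv_3$ and, downstream, why the right-hand side appears as $(\ldot{\vu}{\vy})(\ldot{\vv}{\vx}) - (\ldot{\vu}{\vx})(\ldot{\vv}{\vy})$ rather than the opposite-signed Euclidean Lagrange identity. Once the signature-induced sign is tracked consistently through the multiplication table, the remaining verification is routine.
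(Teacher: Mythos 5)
Your proof is correct. Note, however, that the paper offers no proof of this lemma at all: it simply records the identities as ``well known'' and refers the reader to Ratcliffe. So there is nothing in the paper to match your argument against; what you have done is supply the verification the authors chose to omit. Your route is the natural one and it checks out: the first identity is immediate from $\ldot{\vu}{(\vx\boxtimes\vy)}=\Det(\vu,\vx,\vy)$ and the evenness of cyclic permutations, and for the second your reduction by multilinearity and the symmetries (alternating in $\vu\leftrightarrow\vv$ and in $\vx\leftrightarrow\vy$, symmetric under swapping the two pairs) to the handful of basis cases is legitimate, with the multiplication table $\vv_1\boxtimes\vv_2=-\vv_3$, $\vv_2\boxtimes\vv_3=\vv_1$, $\vv_3\boxtimes\vv_1=\vv_2$ correctly derived from the normalization $\Det(\vv_1,\vv_2,\vv_3)=1$ and $\ldot{\vv_3}{\vv_3}=-1$. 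I verified the representative cases (e.g.\ $\vu=\vx=\vv_1$, $\vv=\vy=\vv_2$ gives $-1$ on both sides, and $\vu=\vx=\vv_2$, $\vv=\vy=\vv_3$ gives $+1$ on both sides), and your alternative via the triple-product expansion $\vu\boxtimes(\vv\boxtimes\vw)=(\ldot{\vu}{\vv})\vw-(\ldot{\vu}{\vw})\vv$ combined with cyclic invariance also reproduces the stated right-hand side. Your remark that the lone sign flip traces back to the timelike basis vector, and is exactly what turns the Euclidean Lagrange identity into its opposite-signed Lorentzian form, is the right thing to emphasize and is consistent with how the identity is used later in the paper (e.g.\ in Corollary 4.2).
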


For a spacelike vector $\vv$, define its {\em Lorentz-orthogonal plane} to be:
\begin{equation*}
\vv^\perp = \{ \vx \, \mid  \ldot{\vx}{\vv}=0\} .
\end{equation*}
It is an {\em indefinite plane}, since the Lorentzian inner
product restricts to an inner product of signature $(1,1)$.  In
particular, $\vv^\perp$ contains two null lines.  The two
future-pointing linear independent vectors of Euclidean length $1$
in this set are denoted $\vv^-$ and $\vv^+$ and are chosen so that
$( \vv^-,\vv^+, \vv )$ is a positively oriented basis for $\V$.

A basis $(a,b,c)$ of $\V$ is positively oriented if and only if
\begin{equation*}
\ldot{(a \boxtimes b)}{c} > 0 .
\end{equation*}

\begin{lemma}\label{lem:xpxo}
Let $\vv\in\V$ be a unit spacelike vector.   Then:
\begin{align*}
\vv\boxtimes\vv^+ &=\vv^+ \\
\vv^- \boxtimes\vv &=\vv^- .
\end{align*}
\end{lemma}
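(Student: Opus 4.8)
The plan is to prove each identity by testing it against the basis $(\vv^-,\vv^+,\vv)$: since the Lorentzian inner product is nondegenerate, two vectors agree precisely when they pair identically with every member of a basis. For the first identity I would compute $\ldot{\vu}{(\vv\boxtimes\vv^+)}$ for $\vu\in\{\vv^-,\vv^+,\vv\}$ using the defining relation $\ldot{\vu}{(\vv\boxtimes\vv^+)}=\Det([\vu~\vv~\vv^+])$, and compare with $\ldot{\vu}{\vv^+}$. Two of the three slots are immediate: taking $\vu=\vv$ or $\vu=\vv^+$ produces a determinant with a repeated column, hence $0$, while on the other side $\ldot{\vv}{\vv^+}=0$ because $\vv^+\in\vv^\perp$ and $\ldot{\vv^+}{\vv^+}=0$ because $\vv^+$ is null. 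The only substantive slot is $\vu=\vv^-$, where, after swapping the last two columns, the whole claim reduces to the single scalar identity
\[
\Det([\vv^-~\vv^+~\vv]) \,=\, -\,\ldot{\vv^-}{\vv^+}.
\]

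To establish this I would first observe that $\vv^-\boxtimes\vv^+$ is Lorentz-orthogonal to both $\vv^-$ and $\vv^+$ (each pairing is again a determinant with a repeated column), hence orthogonal to the plane $\vv^\perp$ they span; since $\vv$ is unit spacelike, $(\vv^\perp)^\perp=\R\vv$, so $\vv^-\boxtimes\vv^+=\lambda\vv$ for some scalar $\lambda$. Pairing with $\vv$ then gives $\Det([\vv^-~\vv^+~\vv])=\lambda\,\ldot{\vv}{\vv}=\lambda$, so it remains only to identify $\lambda$. Applying the second identity of Lemma~\ref{lem:crossprod} to $\ldot{(\vv^-\boxtimes\vv^+)}{(\vv^-\boxtimes\vv^+)}$ and using that $\vv^\pm$ are null collapses the four terms to $\lambda^2=(\ldot{\vv^-}{\vv^+})^2$.

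The main obstacle is pinning down the sign of $\lambda$, and this is exactly where the normalization conventions earn their keep. The orientation hypothesis that $(\vv^-,\vv^+,\vv)$ is positively oriented gives $\Det([\vv^-~\vv^+~\vv])>0$, hence $\lambda>0$; on the other hand, two linearly independent future-pointing null vectors always pair negatively, $\ldot{\vv^-}{\vv^+}<0$ (a Cauchy--Schwarz estimate on their spatial parts, with equality only in the parallel case). Together with $\lambda^2=(\ldot{\vv^-}{\vv^+})^2$ this forces $\lambda=-\ldot{\vv^-}{\vv^+}$, which is precisely the boxed scalar identity and so completes the first equation. The second equation, $\vv^-\boxtimes\vv=\vv^-$, follows by the identical three-slot comparison; its one nontrivial slot, with $\vu=\vv^+$, reduces to $\Det([\vv^+~\vv^-~\vv])=\ldot{\vv^+}{\vv^-}$, which is the same identity after a single column swap and the symmetry of the inner product.
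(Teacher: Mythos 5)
Your proof is correct. Note that the paper itself supplies no argument for this lemma --- it simply defers to Charette--Drumm \cite{CharetteDrumm2} --- so your write-up is a self-contained substitute rather than a variant of the authors' proof. The structure is sound: testing both sides against the basis $(\vv^-,\vv^+,\vv)$ is legitimate by nondegeneracy, the two degenerate slots vanish for the reasons you give, and the whole lemma correctly collapses to the scalar identity $\Det([\vv^-~\vv^+~\vv])=-\ldot{\vv^-}{\vv^+}$. Your derivation of that identity is also clean: orthogonality of $\vv^-\boxtimes\vv^+$ to $\vv^\perp$ forces $\vv^-\boxtimes\vv^+=\lambda\vv$, the second formula of Lemma~\ref{lem:crossprod} pins down $\lambda^2=(\ldot{\vv^-}{\vv^+})^2$ since $\vv^\pm$ are null, and the sign is resolved exactly where it should be --- positivity of $\Det([\vv^-~\vv^+~\vv])$ from the orientation convention on $(\vv^-,\vv^+,\vv)$, together with $\ldot{\vv^-}{\vv^+}<0$ for linearly independent future-pointing null vectors. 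You also use the hypothesis $\ldot{\vv}{\vv}=1$ in precisely the two places it is needed (identifying $(\vv^\perp)^\perp$ with $\R\vv$ and evaluating $\ldot{\vv}{\lambda\vv}$), and the second identity does reduce to the first by a column swap. A quick coordinate check with $\vv=(1,0,0)$ confirms all the sign conventions come out as you claim.
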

\noindent
For the proof, see Charette-Drumm~\cite{CharetteDrumm2}.

Let $\iso$ denote the group of all affine transformations that
preserve the Lorentzian scalar product on the space of directions;
$\iso$ is isomorphic to $\oto\ltimes\V$.  We shall restrict
our attention to those transformations whose linear parts are in
$\soto$, thus preserving orientation and time-orientation. As
above, $\LL$ denotes the projection onto the {\em linear part} of
an affine transformation.

Suppose $g\in\soto$ and $g\neq \Id$.
\begin{itemize}
\item $g$ is {\em hyperbolic} if it has three distinct real
eigenvalues;
\item $g$ is {\em parabolic} if its only eigenvalue is 1;
\item $g$ is {\em elliptic} if it has no real eigenvalues.
\end{itemize}
Denote the set of hyperbolic elements in $\soto$ by
$\Hyp_0$ and the set of parabolic elements by $\Par_0$.

We also call $\gamma\in\iso$ {\em hyperbolic}
(respectively  {\em parabolic}, {\em elliptic})
if its linear part $\LL(\gg)$ is
hyperbolic (respectively  parabolic, elliptic).
Denote the set of hyperbolic elements in $\iso$ by
$\Hyp$ and the set of parabolic transformations by $\Par$.

Let $\gg\in \Hyp \cup \Par$.  The eigenspace $\Fix(\LL(\gg))$ is one-dimensional.
Let $\vv\in\Fix\big(\LL(\gg)\big)$ be a non-zero vector and $x\in\E$.
Define:
\begin{equation*}
\na{\vv}(\gg) \; := \;\ldot{( \gg(x)-x)}{\vv}.
\end{equation*}

\noindent
The following facts are proved in \cite{Abels,CharetteDrumm1,
CharetteDrumm2,DrummGoldman2,G0,GM}:

\begin{itemize}
\item
$\na{\vv}(\gg)$ is independent of $x$;
\item $\na{\vv}(\gamma)$ is identically zero if and only if $\gamma$
fixes a point;\label{hpfact:neq0}
\item For any $\eta\in\iso$ with $h=\LL(\eta)$,
\begin{equation*}
\na{h(\vv)}(\eta\gamma\eta^{-1}) \,=\, \na{\vv}(\gamma)
\end{equation*}
where $\vv\in \Fix(g)$ and $h=\LL(\eta)$;
\item For any $n\in\Z$,
\begin{equation*}
\na{\vv}(\gg^{n}) \,=\, \vert n\vert  \na{\vv}(\gg).
\end{equation*}
\end{itemize}
\noindent
A linear transformation $g$ induces a
natural orientation on $\Fix(g)$ as follows.
\begin{defn}\label{positive}
Let $g\in\Hyp_0\cup\Par_0$. A vector $\vv\in\Fix(g)$ is
{\em positive relative to $g$\/} if and only if
\begin{equation*}
(\vv, \vx, g\vx )
\end{equation*}
is a positively oriented basis, where $\vx$ is any null or
timelike vector which is not an eigenvector of $g$.
\end{defn}
\noindent
The {\em sign of $\gamma$} is the sign of $\na{\vv}(\gamma)$,
where $\vv$ is any positive vector in $\Fix(g)$.
For $n<0$  the orientation of $\Fix(g^n)$
reverses,  so $\gamma$ and $\gg^{-1}$ have equal sign.

\begin{lemma}
[\cite{Margulis1,Margulis2,CharetteDrumm1}]\label{lem:opposite}
Let $\gg_1$, $\gg_2\in \Hyp \cup \Par$ and suppose $\gg_1$ and
$\gg_2$ have opposite signs. Then $\langle \gg_1, \gg_2\rangle$
does not act properly on $\E$.
\end{lemma}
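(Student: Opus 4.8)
The plan is to violate properness directly: assuming $\gg_1,\gg_2$ have opposite signs, I would exhibit a compact $K\subset\E$ together with infinitely many distinct elements of $\langle\gg_1,\gg_2\rangle$, each carrying a point of $K$ back into $K$, which no proper action permits. Replacing the generators by their inverses if needed (which by the excerpt preserves signs), normalize so that for positive eigenvectors $\vv_1\in\Fix(\LL(\gg_1))$ and $\vv_2\in\Fix(\LL(\gg_2))$ one has $\na{\vv_1}(\gg_1)>0>\na{\vv_2}(\gg_2)$. I would first treat the generic hyperbolic case, where $\LL(\gg_1),\LL(\gg_2)\in\Hyp_0$ have four distinct fixed points on $\partial\Ht$, and dispose of the degenerate and parabolic cases at the end.

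First I would record the affine dynamics of a hyperbolic power. Writing $\LL(\gg)=g$ with eigenvalues $\lambda>1$, $\lambda^{-1}$, $1$ and the corresponding eigenvectors, and expanding the translational part in this eigenbasis, one sees that the $\vv^0$-component of $\gg^n(x)-x$ equals exactly $n\,\na{\vv^0}(\gg)$ (the power rule of the excerpt), the component along the expanding null eigenvector grows like $\lambda^n$, and the component along the contracting one stays bounded. Thus the only non-escaping feature of the dynamics is the invariant spacelike line $\cg$, along which $\gg$ translates by the signed length $\na{\vv^0}(\gg)$.

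Next I would form $w_n:=\gg_1^{a_n}\gg_2^{b_n}$. For large $a_n,b_n$ this is hyperbolic, and its attracting and repelling fixed points in $\partial\Ht$ converge to $\xi^+(\LL(\gg_1))$ and $\xi^-(\LL(\gg_2))$ respectively; since these limits are distinct, the axes of $w_n$ converge in $\Ht$ to a fixed geodesic. Taking $a_n,b_n\to\infty$ with $a_n/b_n\to-\na{\vv_2}(\gg_2)/\na{\vv_1}(\gg_1)>0$, Margulis's near-additivity estimate for the invariant under aligned products keeps the translation length $|\na{\vv^0_n}(w_n)|$ bounded. Since the $w_n$ are pairwise distinct, this is still not a contradiction by itself: a sequence of hyperbolic isometries of bounded translation length can act properly provided their axes escape to infinity in $\E$.

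Hence the decisive step --- and the place the opposite-sign hypothesis is used --- is to show that the spacelike axes $C_{w_n}$ stay in a bounded region of $\E$. Writing $w_n(x)=G_nx+U_n$, the transverse position of $C_{w_n}$ is governed by $-U_n^+/(\lambda_n-1)$ and $-U_n^-/(\lambda_n^{-1}-1)$ in the eigenframe of $G_n$, so the issue reduces to estimating the translational part $U_n=u(\gg_1^{a_n})+\LL(\gg_1)^{a_n}u(\gg_2^{b_n})$ against the degenerating eigenframe. The point is that with opposite signs the accumulated neutral translation $a_n\na{\vv_1}(\gg_1)+b_n\na{\vv_2}(\gg_2)$ cancels to a bounded quantity rather than diverging, and tracking this through the eigenframe keeps $C_{w_n}$ bounded (equal signs would instead drive the axes to infinity, consistent with properness of this family). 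Granting this, choosing $p_n\in C_{w_n}$ in the bounded region gives $w_n(p_n)=p_n+\na{\vv^0_n}(w_n)\,\vv^0_n$ with $p_n$ and $w_n(p_n)$ both in a fixed compact $K$, contradicting properness. The hard part is exactly this last estimate, since it concerns the translational data rather than the linear parts, and it is precisely where the sign hypothesis is indispensable. Finally, a shared fixed point of the linear parts collapses the two neutral directions and makes the cancellation direct, while the parabolic case --- where $\Fix(\LL(\gg))$ is spanned by a null vector and only the sign of $\na{\vv}$ is defined --- follows by the same scheme with polynomial escape rates, using the extension to $\Par$ of Charette--Drumm~\cite{CharetteDrumm1}.
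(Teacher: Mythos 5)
The paper gives no proof of this lemma: it is quoted from Margulis \cite{Margulis1,Margulis2} for hyperbolic elements and from Charette--Drumm \cite{CharetteDrumm1} for the parabolic extension, so the only fair comparison is with those sources. Your outline does reproduce the skeleton of Margulis's argument --- form $w_n=\gg_1^{a_n}\gg_2^{b_n}$, use the opposite signs to choose $a_n/b_n$ so that $a_n\na{\vv_1}(\gg_1)+b_n\na{\vv_2}(\gg_2)$ stays bounded, invoke near-additivity to bound $\alpha(w_n)$, and then exhibit points of a fixed compact set displaced by a bounded amount --- and you correctly observe that bounded translation lengths alone do not contradict properness. The problem is that the step you yourself call decisive, namely that the axes $C_{w_n}$ meet a fixed compact subset of $\E$, is introduced with ``granting this'' and never established. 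That uniform estimate is essentially the entire technical content of Margulis's proof (see Abels \cite{Abels} for the details); a write-up that grants it has only done the easy bookkeeping.

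Moreover, the heuristic you offer for that step is incorrect, so the gap is conceptual rather than a matter of omitted computation. The cancellation $a_n\na{\vv_1}(\gg_1)+b_n\na{\vv_2}(\gg_2)=O(1)$ concerns the component of the translational part $U_n$ along the neutral eigendirection of $\LL(w_n)$; that component measures displacement \emph{along} the axis, i.e.\ it is exactly what bounds $\alpha(w_n)$ in your earlier step, and it says nothing about \emph{where} the axis sits. The transverse position of $C_{w_n}$ is governed by the two null components of $U_n$ read in the degenerating eigenframe of $\LL(w_n)$, and bounding those requires comparing the exponential growth of $u(\gg_1^{a_n})$ and of $\LL(\gg_1)^{a_n}u(\gg_2^{b_n})$ against eigenvalues of size roughly $\lambda_1^{a_n}\lambda_2^{b_n}$ and against dual frames converging at a definite exponential rate; this estimate holds for any transverse hyperbolic pair and is independent of the signs. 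Your parenthetical claim that equal signs would ``drive the axes to infinity'' is false: in the proper, same-sign case the axes of $\gg_1^{a}\gg_2^{b}$ still meet a compact set, and properness survives because the translation lengths diverge, not because the axes escape. So the sign hypothesis is used exactly once, in the choice of exponents; the compactness estimate you defer is a separate, sign-independent fact that still needs a proof or an honest citation. The same criticism applies to the one-sentence dismissal of the non-transverse and parabolic cases, where the polynomial (rather than exponential) escape rates change the required estimates and are precisely the content of \cite{CharetteDrumm1}.
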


Let $\G_0\subset \oto$ be a free group
and $\rho$ an affine deformation
of $\G_0$:
\begin{equation}\label{eqn:affdef}
\rho(g) (x) = g(x) + u(g)
\end{equation}
where $x\in\V$. Then $\G_0\xrightarrow{u}\V$ is a cocycle of $\G_0$ with
coefficients in the $\G_0$-module $\V$ corresponding to the linear
action of $\G_0$.  As affine
deformations of $\G_0$ correspond to cocycles in $\ZZ$, translational conjugacy classes of affine deformations
comprise the cohomology group $\HH$.

If $g\in\Hyp_0$, set $\vo{g}$ to be the unique positive vector in $\Fix(g)$ such that $\ldot{\vo{g}}{\vo{g}}=1$.  If $g\in\Par_0$, choose a positive vector in $\Fix(g)$ and call it $\vo{g}$.

Let $u\in\ZZ$. Reinterpreting the Margulis invariant as a linear
functional on the space of cocycles $\ZZ$, set:
\begin{align*}
\G_0&\xrightarrow{\alpha_{[u]}}  \R\\
                g&\longmapsto \na{\vo{g}}(\gg),
\end{align*}
\noindent
where $\gg=\rho(g)$ is the affine deformation corresponding to $u(g)$.
As the notation indicates, $\alpha_{[u]}$ only depends on the cohomology class of $u$,
since $\na{\vo{g}}$ is a class function.

\section{Hyperbolic geometry and the three-holed sphere}
\label{sec:THS}

Let $\THS$ denote a complete hyperbolic surface homeomorphic to a
three-holed sphere.  Each of the three ends can either flare out
(that is, have infinite area) or end in a cusp.  In the former
case, a loop going around the end will have hyperbolic holonomy,
and parabolic holonomy in the latter case. We consider certain
geodesic laminations on the surface from which we will construct
crooked fundamental domains.

Fixing some arbitrary basepoint in $\THS$, let $\G_0$ denote the
image under the holonomy representation of the fundamental group
of $\THS$. We may thus identify $\THS$ with $\Ht/ \G_0$.

The fundamental group of $\THS$ is free of
rank two and admits a presentation
\begin{equation}\label{eq:present}
\G_0=\langle g_1,g_2,g_3~\mid~g_3g_2g_1=\Id\rangle,
\end{equation}
where the $g_i$ correspond to the components of $\partial\Sigma$
and  may be hyperbolic or parabolic.

For the rest of the paper, unless otherwise noted, the $g_i$ and their
affine deformations $\gg_i$ are indexed by $i= 1,2,3$ with addition in
$\Z/3\Z$.

If $g_i$ is hyperbolic, it admits a unique invariant axis $l_i\subset\Ht$
which projects to an end of the three-holed sphere.  For $g_i$
parabolic, we think of this invariant line as shrunk to a point on
the ideal boundary.  For hyperbolic $g_i$, set $\vp{i}$, $\vm{i}$ to be its attracting and repelling fixed points,
respectively; if $g_i$ is parabolic, set $\vp{i}=\vm{i}$ to be its unique fixed point.

Since $\G_0$ is discrete, the $l_i$'s are
pairwise disjoint. Furthermore, substituting inverses if
necessary, we assume for convenience that the direction of
translation along the axes is as in Figure~\ref{fig:axes}.  (In
this case, all three $g_i$'s are hyperbolic.)

\begin{figure}
\centerline{\input{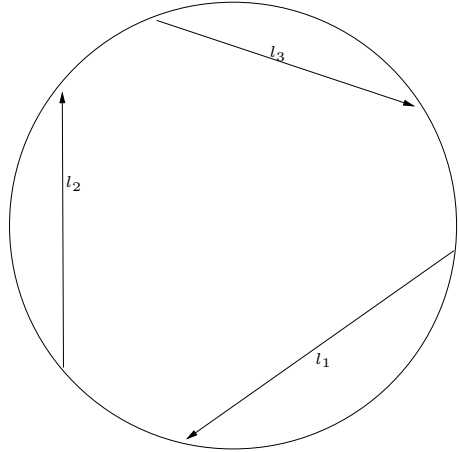}} \caption{The invariant
lines for $g_1,g_2,g_3$, with direction indicated by the arrows.}
\label{fig:axes}
\end{figure}
The three arcs in $\Ht$ respectively joining $\vp{i}$  to
$\vp{i+1}$ project to a geodesic lamination of $\THS$ as drawn in
Figures~\ref{fig:ideal1} and~\ref{fig:ideal2}.

 \begin{figure}
 \centerline{\input{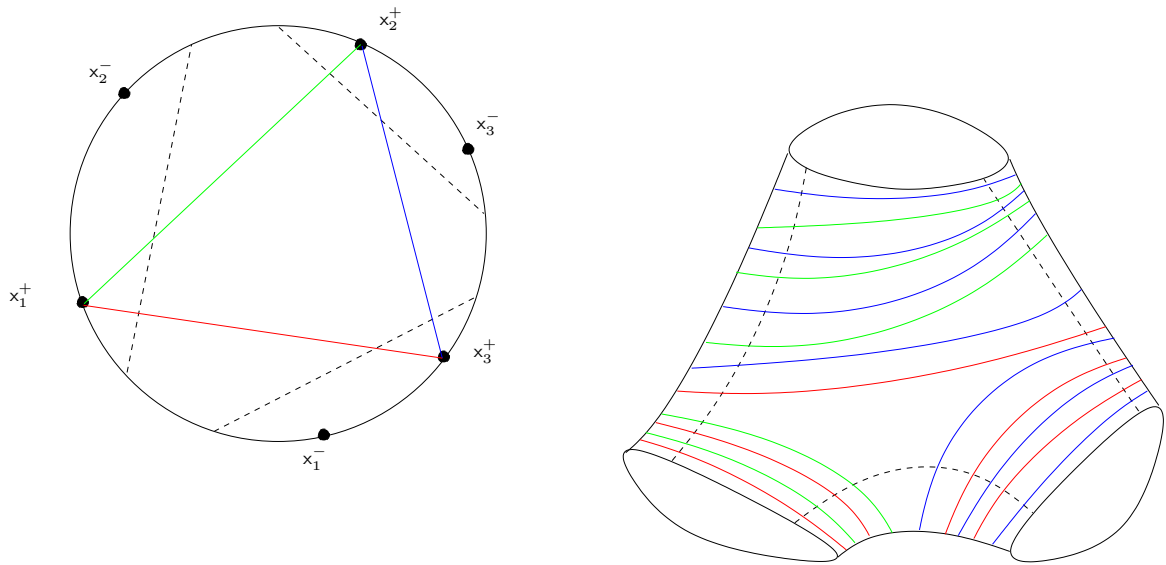}}
 \caption{Three lines in $\Ht$ joining endpoints of the invariant axes $l_i$.
 On the right, the induced lamination of $\THS$.}
 \label{fig:ideal1}
 \end{figure}

 \begin{figure}
 \centerline{\input{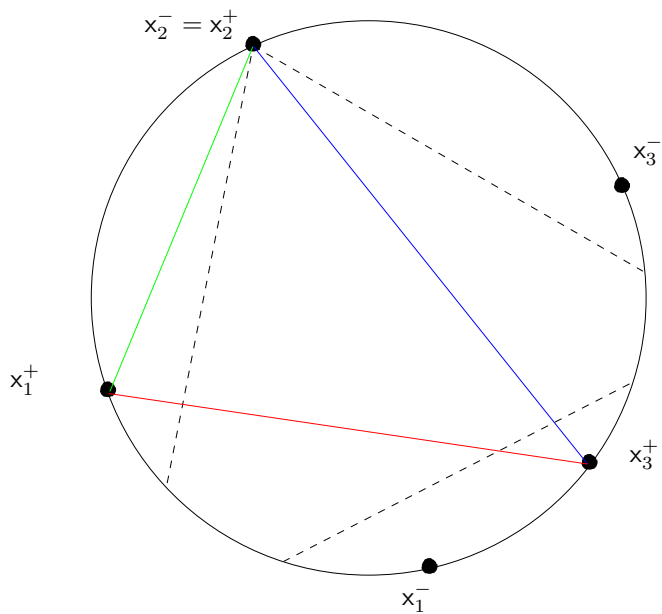}}
 \caption{Three lines in $\Ht$ joining endpoints of $l_i$, with $g_2$
 parabolic and $l_2$ an ideal point.}
 \label{fig:ideal2}
 \end{figure}

We shall adopt the following model for $\Ht$ in terms of Lorentzian affine space
$\E$. A {\em future-pointing timelike ray\/} is a ray
$q + \R_+ \vw$, where $q\in\E$ and $\vw\in\V$ is a future-pointing
timelike vector. Parallelism defines an equivalence relation on
future-pointing timelike rays, and points of $\Ht$ identify with
equivalence classes of future-pointing timelike rays.

Denote by $[q + \R_+ \vw]$ the point in $\Ht$ corresponding to the equivalence class of the ray $q + \R_+ \vw$.

Geodesics in $\Ht$ identify with parallelism classes of indefinite affine planes;
a point in $\Ht$ is incident to a geodesic if and only if the corresponding
future-pointing timelike ray and indefinite affine plane are parallel.
A half-space $H$ in $\E$ bounded by an indefinite affine plane determines
a half-plane $\HS\subset \Ht$. A point $[q + \R_+ \vw]$ in $\Ht$
lies in $\HS$ if and only
if  $q+\R_+\vw$ intersects $H$ in a ray, that is, $q + t\vw \in H$
for $t \muchbigger 0$.

Dually, geodesics in $\Ht$ correspond to spacelike lines, since the
Lorentz-orthogonal plane of a spacelike vector is indefinite.  In
fact, if $l=\R\vo{g_i}$, then the null vectors $\ypm{(\vo{g_i})}$
respectively project to the ideal points $\vpm{i}$.

Furthermore spacelike vectors correspond to oriented geodesics, 
or, equivalently, to half-planes in $\Ht$.
A spacelike vector spans a unique spacelike ray, which contains
a unique unit spacelike vector $\vv$. The corresponding half-plane
is
\begin{equation*}
\HS(\vv) := \{ [q + \R_+ \vw]\in\Ht \mid \ldot{\vw}{\vv} \ge 0  \} .
\end{equation*}
Extending terminology from $\Ht$ to $\V$, say that two spacelike
vectors $\vu,\vv\in\V$ are:
\begin{itemize}
\item {\em ultraparallel} if $\vu\boxtimes\vv$ is spacelike;
\item {\em asymptotic} if $\vu\boxtimes\vv$ is null;
\item {\em crossing} if  if $\vu\boxtimes\vv$ is lightlike.
\end{itemize}

\section{Crooked planes and half-spaces}\label{sec:cp}

Crooked planes are Lorentzian analogs of equidistant surfaces.  We
will think of a triple of crooked planes as the natural extension
of a lamination. We will see how to get pairwise disjoint crooked
plane triples, yielding proper affine deformations of the linear
holonomy. In this section, we define crooked planes and discuss
criteria for disjointness.

Here is a somewhat technical, yet important, point.  What we call
crooked planes  and half-spaces should really be called 
{\em positively extended\/} crooked planes and half-spaces.  
We require
crooked planes to be positively extended when the signs of the
Margulis invariants are positive. But for the case of negative
Margulis invariants, we must use 
{\em negatively extended crooked planes\/}. As the arguments are essentially the same up to a change
in sign change,  for the rest of the paper we will restrict to the
case of positive signs. The curious reader should 
consult~\cite{DrummGoldman1}. 
(In that paper the crooked planes are called
positively or  negatively {\em oriented\/}).

Given a null vector $\vx\in\V$, set $\pp(\vx)$ to be the set of
(spacelike) vectors $\vw$ such that $\yp{\vw}$ is parallel to $\vx$.
This half-plane in the Lorentz-orthogonal plane $\vx^\perp$ is a connected
component of $\vx^\perp \setminus \langle\vx\rangle$.
If $\vv$ is a spacelike vector, then
\begin{align*}
\vv&\in\pp(\yp{\vv}) \\
-\vv& \in\pp(\ym{\vv}).
\end{align*}

Let $p\in\E$ be a point and $\vv\in\R^{2,1}$ a spacelike vector.
Define the {\em crooked plane}
$\CP(\vv,p)\subset\E$ with {\em vertex\/} $p$ and
{\em direction vector\/} $\vv$
to be the union of two {\em wings}
\begin{align*}
& p +\pp(\yp{\vv})\\
& p +\pp(\ym{\vv})
\end{align*}
and a {\em stem}
\begin{equation*}
p +\  \{\vx\in\V \mid\ \ldot{\vv}{\vx} = 0,
\ldot{\vx}{\vx} \le 0 \} .
\end{equation*}
Each wing is a half-plane, and the stem is the union of two
quadrants in a spacelike plane. The crooked plane itself is a
piecewise linear submanifold, which stratifies into four connected
open subsets of planes (two wings and the two components of the
interior of the stem), four null rays, and a vertex.

\begin{defn}
Let $\vv$ be a spacelike vector and $p\in\E$.
The {\em crooked half-space} determined by $\vv$ and $p$,
denoted $\H(\vv,p)$, consists of all $q\in\E$ such that:
\begin{itemize}
\item $\ldot{(q-p)}{\yp{\vv}}\leq0$ if $\ldot{(q-p)}{\vv}\geq 0$;
\item $\ldot{(q-p)}{\ym{\vv}}\geq0$ if $\ldot{(q-p)}{\vv}\leq 0$;
\item  Both conditions must hold for $q-p\in\vv^\perp$.
\end{itemize}
\end{defn}
\noindent
Observe that $\CP(\vv,p)=\CP(-\vv,p)$. In contrast, the crooked half-spaces
$\H(\vv,p)$ and $ \H(-\vv,p)$ are distinct spaces.
Their  union and intersection are respectively:
\begin{align*}
\H(\vv,p)\,\cup\, \H(-\vv,p) & =\E \\
\H(\vv,p)\,\cap\, \H(-\vv,p) & =\CP(\vv,p)= \CP(-\vv,p).
\end{align*}

Crooked half-spaces in $\E$ determine half-planes in $\Ht$ as follows.
As in the preceding section, a point in $\Ht$ corresponds to
the equivalence class of a future-pointing timelike ray.

\begin{lemma}\label{lem:halfspace}
Let $p,q\in\E$ and  $\vv,\vw\in\V$ spacelike.  Suppose that
$\H(\vv,p)$ is a crooked  half-space and that
$\ldot{\vw}{\vv}\neq0$. Then $q + t\vw\in
\int\left(\H(\vv,p)\right)$ for $t\muchbigger 0$ if and only if
$[q + t\vw]\in \int\left(\HS(\vv)\right)$.
\end{lemma}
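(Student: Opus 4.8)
The plan is to turn the biconditional into an asymptotic sign computation along the ray $r(t) := (q-p) + t\vw$, since for $t\muchbigger 0$ every inequality defining $\H(\vv,p)$ is governed by its leading term $t\vw$. First, a reading of the hypotheses: for $[q + \R_+\vw]$ to name a point of $\Ht$ the direction $\vw$ must be future-pointing timelike, so the right-hand side forces that interpretation; and since both $\H(\vv,p)$ and $\HS(\vv)$ are unchanged when $\vv$ is replaced by a positive multiple (the defining inner products are homogeneous in $\vv$, while $\yp{\vv}$ and $\ym{\vv}$ depend only on the ray $\R_+\vv$ and the orientation), I may assume $\vv$ is unit spacelike. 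On the hyperbolic side the claim is immediate from the definition: $\int(\HS(\vv)) = \{[q'+\R_+\vw'] \mid \ldot{\vw'}{\vv} > 0\}$, its boundary being the geodesic $\ldot{\cdot}{\vv}=0$, so $[q+\R_+\vw] \in \int(\HS(\vv))$ if and only if $\ldot{\vw}{\vv} > 0$. The whole problem thus reduces to showing that $q + t\vw \in \int(\H(\vv,p))$ for $t\muchbigger 0$ exactly when $\ldot{\vw}{\vv} > 0$.

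The computation rests on one fact: because $\yp{\vv}$ and $\ym{\vv}$ are future-pointing null and $\vw$ is future-pointing timelike, both $\ldot{\vw}{\yp{\vv}}$ and $\ldot{\vw}{\ym{\vv}}$ are strictly negative (future-pointing causal vectors have nonpositive inner product, with equality only when both are null and parallel). Now split on the sign of $\ldot{\vw}{\vv}\neq 0$. If $\ldot{\vw}{\vv} > 0$, then for large $t$ the dominant-term estimates give $\ldot{r(t)}{\vv} > 0$, and since $\ldot{\vw}{\yp{\vv}} < 0$ also $\ldot{r(t)}{\yp{\vv}} \to -\infty$, so $\ldot{r(t)}{\yp{\vv}} < 0$; these are exactly the (strict forms of the) first defining condition of the crooked half-space, so $q + t\vw \in \H(\vv,p)$. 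If instead $\ldot{\vw}{\vv} < 0$, then for large $t$ one has $\ldot{r(t)}{\vv} < 0$, while $\ldot{\vw}{\ym{\vv}} < 0$ forces $\ldot{r(t)}{\ym{\vv}} < 0$; this \emph{violates} the second defining condition (which demands $\ldot{r}{\ym{\vv}} \geq 0$ whenever $\ldot{r}{\vv} \leq 0$), so $q + t\vw \notin \H(\vv,p)$. In both cases the conclusion matches the hyperbolic side.

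The one point needing care, and the step I expect to be the main obstacle, is upgrading membership in $\H(\vv,p)$ to membership in its \emph{interior} (and dually non-membership to non-membership in the interior). This is precisely where the hypothesis $\ldot{\vw}{\vv} \neq 0$ does its work: it excludes the borderline direction $\vw\in\vv^\perp$, along which the ray would run asymptotically parallel to the stem and could hug the crooked plane $\CP(\vv,p)$ without settling cleanly on one side. With $\ldot{\vw}{\vv}\neq 0$, the inequalities established above are strict and hence open. In the positive case the open set $\{x \mid \ldot{(x-p)}{\vv} > 0,\ \ldot{(x-p)}{\yp{\vv}} < 0\}$ is a neighborhood of $q + t\vw$ contained in $\H(\vv,p)$, since there the remaining defining conditions are vacuous; thus $q + t\vw$ is an interior point. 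In the negative case $q + t\vw$ lies in the open complement of $\H(\vv,p)$, so it is certainly not interior. Combining the two cases with the characterization of $\int(\HS(\vv))$ yields the biconditional.
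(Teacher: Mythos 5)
Your proof is correct and follows essentially the same route as the paper's: an asymptotic sign analysis of the defining inequalities of $\H(\vv,p)$ along the ray, with the case split governed by the sign of $\ldot{\vw}{\vv}$. The only difference is presentational — the paper normalizes $p$, $\vv$, $\vw$ to explicit coordinates before computing, whereas you argue invariantly via $\ldot{\vw}{\ypm{\vv}}<0$ for future-pointing timelike $\vw$ (and your reading of the hypothesis, taking $\vw$ future-pointing timelike despite the statement's ``spacelike,'' matches the paper's own proof, as does your explicit treatment of the interior).
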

\begin{proof}
It suffices to consider the case that $p = 0$ and
\begin{equation*}
\vv = \bmatrix 1 \\ 0 \\ 0 \endbmatrix,
\end{equation*}
that is,
\begin{equation*}
\H(\vv,p) \;=\; \left\{ \bmatrix x \\ y \\ z \endbmatrix \;\bigg{|}\;
y + z \ge 0  \text{~if~} x \ge 0  \\
\text{~and~}
y - z \ge 0  \text{~if~} x \le 0
\right\}.
\end{equation*}
By applying an automorphism preserving $\H(\vv,p)$, we may assume
\begin{equation*}
q = \bmatrix x_0 \\ y_0 \\ z_0 \endbmatrix, \quad
\vw = \bmatrix d  \\ 0 \\ 1 \endbmatrix.
\end{equation*}
where $\vert d \vert < 1$.

Set $q(t) := q + t\vw$.  For any value of $d$, $q(t)$ eventually
satisfies 
\begin{equation*}
y + z = y_0 + z_0 + t > 0
\end{equation*}
for $t\muchbigger 0$ and $ y -z<0$.
The point $[q + t\vw]$ lies in the interior $\int\left(\HS(\vv)\right)$ when $d > 0$.
In this case,
$q(t)$ eventually satisfies 
\begin{equation*} 
x = x_0 + t d > 0. 
\end{equation*} 
Thus
$q(t)\in \int\left(\H(\vv,p)\right)$.

Conversely, if $[q + t\vw] \in \HS(-\vv)$, then $d<0$.
If 
$t\muchbigger0$, then $ x< 0$.
Therefore $q(t)\notin \int\left(\H(\vv,p)\right)$ as desired.
\end{proof}

\section{Disjointness of crooked half-spaces}


By~\cite{Drumm1,Drumm2} (see \cite{CG} for another exposition),
the complement of a disjoint union of crooked half-spaces
with pairwise identifications of its boundary defines a fundamental
polyhedron for the group generated by the identifications. 
This section develops criteria for when two crooked half-spaces
are disjoint.
Lemma~\ref{lem:inhalfspace} reduces disjointness of
crooked half-spaces to disjointness of crooked planes. We need
only consider pairs of crooked half-spaces in the case of
ultraparallel or asymptotic vectors: when $\vu$ and $\vv$ are
crossing $\CP(\vu,p)$ and $\CP(\vv,p)$ always
intersect~\cite{DrummGoldman1}. Theorem~\ref{thm:UltraCPCP} and
Theorem~\ref{thm:AsCPCP} provide criteria for disjointness for
crooked planes, and  were established in \cite{DrummGoldman1}.
Their respective corollaries, Corollary~\ref{cor:seammove} and
Corollary~\ref{cor:seammoveII}, provide more useful criteria in
terms of the direction vectors.

 \begin{defn}
\label{def:co} Spacelike vectors $\vv_1, \dots, \vv_n\in\V$
are
{\em consistently oriented} if and only if, whenever $i\neq j$,
\begin{itemize}
\item $\ldot{\vv_i}{\vv_j}<0$;
\item $\ldot{\vv_i}{\ypm{\vv_j}}\leq 0$.
\end{itemize}
\end{defn}
\noindent The second requirement implies that the $\vv_i$ are
pairwise ultraparallel or asymptotic. Equivalently,
$\vv_i,\vv_j,i\neq j$ are consistently oriented if and only if the
interiors of the half-planes $\HS(\vv_i)$ and $\HS(\vv_j)$ are
pairwise disjoint. (See~\cite{G}, \S 4.2.1 for details.)

\begin{lemma}\label{lem:inhalfspace}
Suppose $\vu,\vv$ are consistently oriented, $p\in\E$ and
$\CP(\vu,p)$ and $\CP(\vv,p+\vw)$ are disjoint. Then
$\CP(\vv,p+\vw)\subset  \H(-\vu,p)$.
\end{lemma}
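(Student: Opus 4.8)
The plan is to exploit that the two crooked half-spaces $\H(\vu,p)$ and $\H(-\vu,p)$ cover $\E$ and overlap exactly along their common boundary $\CP(\vu,p)$. First I would record the partition
\begin{equation*}
\E\setminus\CP(\vu,p)\ =\ \int\big(\H(\vu,p)\big)\ \sqcup\ \int\big(\H(-\vu,p)\big),
\end{equation*}
which follows at once from $\H(\vu,p)\cup\H(-\vu,p)=\E$ together with $\H(\vu,p)\cap\H(-\vu,p)=\CP(\vu,p)$, since $\CP(\vu,p)$ is the topological boundary of each half-space. Because $\CP(\vv,p+\vw)$ is a connected piecewise-linear submanifold and is, by hypothesis, disjoint from $\CP(\vu,p)$, it must be contained entirely in one of the two open half-spaces on the right. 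Thus the whole content of the lemma is to decide \emph{which}.

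To identify the correct side I would produce a single explicit point of $\CP(\vv,p+\vw)$ lying in $\int\big(\H(-\vu,p)\big)$ and then invoke the dichotomy above. Since $\vv^\perp$ is indefinite, I choose a future-pointing timelike vector $\vx_0\in\vv^\perp$ with $\ldot{\vx_0}{\vu}\neq 0$. As $\ldot{\vv}{\vx_0}=0$ and $\ldot{\vx_0}{\vx_0}<0$, the whole ray $(p+\vw)+\R_+\vx_0$ lies in the stem of $\CP(\vv,p+\vw)$, hence in the crooked plane itself. Moreover $\vx_0\in\vv^\perp$ means the point $[(p+\vw)+\R_+\vx_0]\in\Ht$ is incident to the geodesic $\partial\HS(\vv)$.

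Now I would feed in the consistent-orientation hypothesis through its hyperbolic-geometric meaning: by Definition~\ref{def:co} the half-planes $\HS(\vu)$ and $\HS(\vv)$ have disjoint interiors. A short argument (a point of $\partial\HS(\vv)$ in $\int(\HS(\vu))$ would force nearby interior points of $\HS(\vv)$ into the open set $\int(\HS(\vu))$) then gives $\partial\HS(\vv)\subset\HS(-\vu)$, and in fact every non-ideal point of $\partial\HS(\vv)$ lies in $\int\big(\HS(-\vu)\big)$, the two bounding geodesics meeting at most at a shared ideal point in the asymptotic case. Choosing $\vx_0$ generically so that its associated point of $\partial\HS(\vv)$ is non-ideal, I obtain $[(p+\vw)+\R_+\vx_0]\in\int\big(\HS(-\vu)\big)$. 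Applying Lemma~\ref{lem:halfspace} to the direction vector $-\vu$ then yields $(p+\vw)+t\vx_0\in\int\big(\H(-\vu,p)\big)$ for $t\muchbigger 0$; since this point also lies on $\CP(\vv,p+\vw)$, the connectedness dichotomy forces $\CP(\vv,p+\vw)\subset\int\big(\H(-\vu,p)\big)\subset\H(-\vu,p)$.

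The main obstacle is this last step: correctly translating the algebraic inequalities of consistent orientation into the geometric statement that $\partial\HS(\vv)$ lies on the $\HS(-\vu)$ side, and tracking orientations so that one lands in $\H(-\vu,p)$ rather than $\H(\vu,p)$. The connectedness reduction is routine; the genuine work is this bookkeeping, and the clean device that circumvents computing with the \emph{unknown} vertex displacement $\vw$ is to push out to infinity along the stem, where the contribution of $\vw$ is irrelevant and Lemma~\ref{lem:halfspace} applies directly.
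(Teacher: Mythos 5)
Your proposal is correct and follows essentially the same route as the paper: both reduce to the connectedness dichotomy for $\E\setminus\CP(\vu,p)$, then use a future-pointing timelike ray in the stem of $\CP(\vv,p+\vw)$ together with Lemma~\ref{lem:halfspace} and the fact that consistent orientation forces $\partial\HS(\vv)\subset\HS(-\vu)$. The only difference is that you argue directly by exhibiting a point in $\int\big(\H(-\vu,p)\big)$, whereas the paper assumes the wrong containment and derives a contradiction; this is an inessential reorganization.
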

\begin{proof}
Because
\begin{equation*}
\E\setminus \CP(\vv,p)\;=\; \int\left(\H(\vu,p)\right)\,\cup\,  \int\left(\H(-\vu,p)\right),
\end{equation*}
either $\CP(\vv,p+\vw)\subset  \H(\vu,p)$ or
$\CP(\vv,p+\vw)\subset  \H(-\vu,p)$.

Suppose that $\CP(\vv,p+\vw)\subset  \H(\vu,p)$.
The future-pointing timelike rays on
$\CP(\vv,p+\vw)$ lie on the stem of
$\CP(\vv,p+\vw)$ and correspond to the geodesic
$\partial\HS(\vv)$.

Since a future-pointing timelike ray on
$\CP(\vv,p+\vw)$ lies entirely in
$\H(\vu,p)$,
Lemma~\ref{lem:halfspace} implies
that
$$
\partial\HS(\vv) \subset
\HS(\vu).
$$
Since $\vu,\vv$ are consistently oriented,
the half-spaces
$\HS(\vu)$ and $\HS(\vv)$ are disjoint,
and $\HS(\vv)\subset \HS(-\vu)$, a contradiction.
Thus $\CP(\vv,p+\vw)\subset  \H(-\vu,p)$ as desired.
\end{proof}

\begin{thm} \label{thm:UltraCPCP}
Let $\vv_1$ and $\vv_2$ be consistently oriented, ultraparallel,
unit spacelike vectors and $p_1,p_2\in\E$.  The crooked
planes $\CP(\vv_1,p_1)$ and $\CP(\vv_2,p_2)$ are disjoint if and only if
\begin{equation}\label{eq:DisUltraCPCP}
\ldot{(p_2-p_1)}{(\vv_1\boxtimes\vv_2)} >
\vert\ldot{(p_2-p_1)}{\vv_2}\vert  +
\vert\ldot{(p_2-p_1)}{\vv_1}\vert .
\end{equation}
\end{thm}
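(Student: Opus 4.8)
The plan is to convert the disjointness of the two crooked planes into a single containment statement, and then to translate that containment into the scalar inequality~\eqref{eq:DisUltraCPCP} by testing it on the finitely many extremal features of a crooked plane. First I would normalize by translating by $-p_1$, so that $p_1 = 0$; setting $\vw := p_2-p_1$, the claim becomes that $\CP(\vv_1,0)$ and $\CP(\vv_2,\vw)$ are disjoint if and only if $\ldot{\vw}{(\vv_1\boxtimes\vv_2)} > \vert\ldot{\vw}{\vv_2}\vert + \vert\ldot{\vw}{\vv_1}\vert$. Since $\E\setminus\CP(\vv_1,0) = \int(\H(\vv_1,0))\cup\int(\H(-\vv_1,0))$ and $\CP(\vv_1,0)=\partial\H(-\vv_1,0)$, the two crooked planes are disjoint precisely when $\CP(\vv_2,\vw)\subset\int(\H(-\vv_1,0))$: for the forward implication Lemma~\ref{lem:inhalfspace} (applicable because $\vv_1,\vv_2$ are consistently oriented) places $\CP(\vv_2,\vw)$ inside $\H(-\vv_1,0)$, and disjointness upgrades this to the open half-space; the reverse implication is immediate since $\CP(\vv_1,0)$ is the boundary of that half-space. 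Thus it suffices to characterize this containment.

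Next I would make the containment explicit. Using $(-\vv_1)^+ = \ym{\vv_1}$ and $(-\vv_1)^- = \yp{\vv_1}$, the half-space $\H(-\vv_1,0)$ is cut out by the three linear functionals $\ldot{\cdot}{\vv_1}$, $\ldot{\cdot}{\yp{\vv_1}}$, $\ldot{\cdot}{\ym{\vv_1}}$, with the usual sign-split across $\vv_1^\perp$. I would decompose $\CP(\vv_2,\vw)$ into its two wings $\vw + \pp(\yp{\vv_2})$ and $\vw + \pp(\ym{\vv_2})$ and its stem, parametrize each wing as the half-plane spanned by $\vv_2$ and $\yp{\vv_2}$ (resp.\ $\ym{\vv_2}$) and the stem by its two null generators $\ypm{\vv_2}$, and evaluate the three functionals on each piece. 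Invoking the cross-product identities of Lemma~\ref{lem:crossprod} together with $\vv_2\boxtimes\yp{\vv_2} = \yp{\vv_2}$ and $\ym{\vv_2}\boxtimes\vv_2 = \ym{\vv_2}$ from Lemma~\ref{lem:xpxo}, every such evaluation reduces to a linear expression in the three scalars $\ldot{\vw}{(\vv_1\boxtimes\vv_2)}$, $\ldot{\vw}{\vv_1}$, $\ldot{\vw}{\vv_2}$, with coefficients built from the structural products $\ldot{\vv_1}{\vv_2}$ and $\ldot{\vv_1}{\ypm{\vv_2}}$, which are negative and nonpositive respectively by consistent orientation.

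Finally, because each functional is affine on its (convex) piece, its sign over the piece is controlled by its values at the vertex $\vw$ and along the four null edges of $\CP(\vv_2,\vw)$. Reading off these extremal values, the transverse term $\ldot{\vw}{(\vv_1\boxtimes\vv_2)}$---positive exactly because ultraparallelism makes $\vv_1\boxtimes\vv_2$ spacelike---must dominate the spread of the crooked plane in the $\vv_1$ and $\vv_2$ directions, and the two binding edge constraints contribute $+\ldot{\vw}{\vv_i}$ from one edge and $-\ldot{\vw}{\vv_i}$ from the opposite edge, producing the absolute values $\vert\ldot{\vw}{\vv_1}\vert$ and $\vert\ldot{\vw}{\vv_2}\vert$; assembling the active constraints yields exactly~\eqref{eq:DisUltraCPCP}.

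I expect the main obstacle to be the combinatorial bookkeeping of this case analysis: matching each of the two wings and two stem-quadrants of $\CP(\vv_2,\vw)$ against the sign-split branches of $\H(-\vv_1,0)$, correctly tracking the orientation swap $(-\vv_1)^{\pm} = \vv_1^{\mp}$, and verifying that the consistent-orientation and ultraparallel hypotheses make precisely the two claimed constraints active while all remaining constraints stay slack. The geometric identities of Lemmas~\ref{lem:crossprod} and~\ref{lem:xpxo} are exactly what keeps the bookkeeping manageable, since they collapse each vector evaluation onto the same three invariant scalars appearing in the statement.
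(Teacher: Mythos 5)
First, a point of reference: the paper does not actually prove Theorem~\ref{thm:UltraCPCP}. It states that this criterion and Theorem~\ref{thm:AsCPCP} ``were established in \cite{DrummGoldman1}'' and uses them as black boxes, so your attempt must be measured against the original proof in Drumm--Goldman rather than against anything in the present text. Your opening reduction is sound and is indeed how that argument begins: normalize $p_1=0$, write $\vw=p_2-p_1$, use Lemma~\ref{lem:inhalfspace} together with $\E\setminus\CP(\vv_1,0)=\int\left(\H(\vv_1,0)\right)\cup\int\left(\H(-\vv_1,0)\right)$ to convert disjointness into the single containment $\CP(\vv_2,\vw)\subset\int\left(\H(-\vv_1,0)\right)$, and record $(-\vv_1)^{\pm}=\vv_1^{\mp}$.

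The gap is everything after that: the passage from this containment to the specific inequality \eqref{eq:DisUltraCPCP} is the entire content of the theorem, and you assert it rather than derive it. Concretely: (i) your claim that the sign of each functional on a piece ``is controlled by its values at the vertex $\vw$ and along the four null edges'' is false as stated, since each wing is a translated half-plane cone whose recession directions include $\vv_2$ as well as $\pm\yp{\vv_2}$ (resp.\ $\pm\ym{\vv_2}$); one must separately verify the behavior in the $\vv_2$ direction at infinity, which is exactly where consistent orientation enters, and one must locate where each wing and each stem quadrant crosses the plane $\vv_1^{\perp}$, because the defining inequalities of $\H(-\vv_1,0)$ switch there. (ii) You never show why precisely two constraints are active, why they contribute $+\ldot{\vw}{\vv_i}$ and $-\ldot{\vw}{\vv_i}$ so as to assemble into the two absolute values, or why every remaining wing--wing and wing--stem condition is implied by \eqref{eq:DisUltraCPCP}. (iii) The ``only if'' direction needs its own argument: when the inequality fails you must exhibit an intersection point, e.g.\ by showing a connected piece of $\CP(\vv_2,\vw)$ meets both open crooked half-spaces bounded by $\CP(\vv_1,0)$; you do not sketch this. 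Finally, the parenthetical that $\ldot{\vw}{(\vv_1\boxtimes\vv_2)}$ is positive ``because ultraparallelism makes $\vv_1\boxtimes\vv_2$ spacelike'' is a non sequitur: spacelikeness says nothing about the sign of the pairing with $\vw$. The case analysis you defer as ``bookkeeping'' is the theorem; until it is carried out, this is a plausible plan, not a proof.
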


\begin{cor}\label{cor:seammove}
Let $\vv_1, \vv_2\in\V$ be consistently oriented, ultraparallel
vectors.  Suppose
\begin{equation*}
p_i=a_i\ym{\vv_i}-b_i\yp{\vv_i},
\end{equation*}
for $a_i,b_i>0$, $i=1,2$.  Then $\CP(\vv_1,p_1)$ and $\CP(\vv_2,p_2)$ are
disjoint.
\end{cor}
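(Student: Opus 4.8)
The plan is to reduce the corollary to a direct verification of the disjointness inequality~(\ref{eq:DisUltraCPCP}) of Theorem~\ref{thm:UltraCPCP}. First I would observe that both the crooked plane $\CP(\vv_i,p_i)$ and the prescribed vertex $p_i=a_i\ym{\vv_i}-b_i\yp{\vv_i}$ depend only on the spacelike \emph{ray} spanned by $\vv_i$, since $\ym{\vv_i}$ and $\yp{\vv_i}$ are unchanged under positive rescaling. Thus without loss of generality I may take $\vv_1,\vv_2$ to be unit spacelike, so that Theorem~\ref{thm:UltraCPCP} applies. Writing $p:=p_2-p_1$, it then remains only to prove
\[
\ldot{p}{(\vv_1\boxtimes\vv_2)} \;>\; \vert\ldot{p}{\vv_1}\vert + \vert\ldot{p}{\vv_2}\vert .
\]

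The second step is to compute the three pairings explicitly. Set
\[
s_2:=-\ldot{\vv_1}{\ym{\vv_2}},\; t_2:=-\ldot{\vv_1}{\yp{\vv_2}},\; s_1:=-\ldot{\vv_2}{\ym{\vv_1}},\; t_1:=-\ldot{\vv_2}{\yp{\vv_1}},
\]
all nonnegative by consistent orientation (Definition~\ref{def:co}). Because $\ym{\vv_i},\yp{\vv_i}\in\vv_i^\perp$, the diagonal terms $\ldot{p_i}{\vv_i}$ vanish, giving $\ldot{p}{\vv_1}=\ldot{p_2}{\vv_1}=b_2 t_2 - a_2 s_2$ and $\ldot{p}{\vv_2}=-\ldot{p_1}{\vv_2}=a_1 s_1 - b_1 t_1$. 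For the cross-product term I would combine the cyclic symmetry and contraction identities of Lemma~\ref{lem:crossprod} with Lemma~\ref{lem:xpxo}: from $\vv\boxtimes\yp{\vv}=\yp{\vv}$ and $\ym{\vv}\boxtimes\vv=\ym{\vv}$ one obtains, for instance, $\ldot{\ym{\vv_2}}{(\vv_1\boxtimes\vv_2)}=-\ldot{\vv_1}{\ym{\vv_2}}=s_2$, and three analogous identities. Assembling these yields
\[
\ldot{p}{(\vv_1\boxtimes\vv_2)} \;=\; a_2 s_2 + b_2 t_2 + a_1 s_1 + b_1 t_1 ,
\]
the sum of all four quantities weighted by the \emph{positive} constants $a_i,b_i$.

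Comparing, the right-hand side equals $\vert b_2 t_2 - a_2 s_2\vert + \vert a_1 s_1 - b_1 t_1\vert$, so since every $a_i,b_i,s_i,t_i\ge 0$ the triangle inequality already delivers ``$\ge$''. The real content — and the step I expect to be the main obstacle — is \emph{strictness}, which demands that all four of $s_1,t_1,s_2,t_2$ be strictly positive, whereas consistent orientation alone only forces them to be $\ge 0$. This is precisely where ultraparallelism must enter. I would expand $\vv_1$ in the basis $(\ym{\vv_2},\yp{\vv_2},\vv_2)$ and impose $\ldot{\vv_1}{\vv_1}=1$; writing $c:=\ldot{\vv_1}{\vv_2}$, this produces a relation $s_2 t_2 = \kappa\,(c^2-1)$ for a positive constant $\kappa$. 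By the contraction identity of Lemma~\ref{lem:crossprod}, $\ldot{(\vv_1\boxtimes\vv_2)}{(\vv_1\boxtimes\vv_2)}=c^2-1$, so ultraparallelism (this vector being spacelike) is exactly the statement $c^2>1$; hence $s_2 t_2>0$, forcing $s_2,t_2>0$, and symmetrically $s_1,t_1>0$. With all four strictly positive we get $\vert b_2 t_2 - a_2 s_2\vert < a_2 s_2 + b_2 t_2$ and $\vert a_1 s_1 - b_1 t_1\vert < a_1 s_1 + b_1 t_1$, so the required strict inequality holds and Theorem~\ref{thm:UltraCPCP} concludes that $\CP(\vv_1,p_1)$ and $\CP(\vv_2,p_2)$ are disjoint.
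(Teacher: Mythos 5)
Your proof is correct and follows the same route as the paper: reduce to unit spacelike direction vectors and verify the criterion of Theorem~\ref{thm:UltraCPCP} by expanding all three pairings via Lemmas~\ref{lem:crossprod} and~\ref{lem:xpxo}, arriving at the identical identity $\ldot{(p_2-p_1)}{(\vv_1\boxtimes\vv_2)}=a_1s_1+b_1t_1+a_2s_2+b_2t_2$ against $\vert b_2t_2-a_2s_2\vert+\vert a_1s_1-b_1t_1\vert$. The one point where you go beyond the paper is the strictness: the paper asserts that each term is positive ``since $\vv_1,\vv_2$ are consistently oriented,'' although Definition~\ref{def:co} only guarantees $\ldot{\vv_i}{\ypm{\vv_j}}\le 0$, and your derivation of $s_it_i=\kappa(c^2-1)>0$ from the ultraparallel hypothesis (via the contraction identity for $\ldot{(\vv_1\boxtimes\vv_2)}{(\vv_1\boxtimes\vv_2)}$) correctly supplies the justification that the paper leaves implicit.
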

\begin{proof}
Rescaling if necessary, assume 
$\vv_1,~\vv_2$ are unit spacelike. 
By Lemmas~\ref{lem:crossprod} and~\ref{lem:xpxo},
\begin{align*}
\ldot{\yp{\vv_i}}{(\vv_i\boxtimes\vv_j)}& \;=\;
 \ldot{\yp{\vv_i}}{\vv_j} \\
\ldot{\ym{\vv_i}}{(\vv_i\boxtimes\vv_j)}& \;=\;
-\ldot{\ym{\vv_i}}{\vv_j}.
\end{align*}
for $i\neq j$.
Consequently:
\begin{align}\label{eq:UltraLHS}
\ldot{(p_2-p_1)}{(\vv_1\boxtimes\vv_2)} &\;=\;-\ldot{(a_2\ym{\vv_2}
+b_2\yp{\vv_2})}{\vv_1} \,-\,\ldot{(a_1\ym{\vv_1}+b_1\yp{\vv_1})}{\vv_2}\notag\\
&\;=\; -\ldot{a_2\ym{\vv_2}}{\vv_1} \,-\, \ldot{b_2\yp{\vv_2}}{\vv_1}
\,-\, \ldot{ a_1\ym{\vv_1}}{\vv_2} \,-\,  \ldot{b_1\yp{\vv_1}}{\vv_2}  \notag
\\ &\;>\; \vert\ldot{(a_2\ym{\vv_2} -b_2\yp{\vv_2})}{\vv_1}\vert \notag
\,+\,\vert\ldot{ ( a_1\ym{\vv_1} -b_1 \yp{\vv_1} )}{\vv_2}\vert.
\end{align}
The above inequality follows because each term in the previous
expression is positive (since $\vv_1,\vv_2$ are consistently
oriented). Finally:
\begin{align*}
\vert\ldot{(p_2-p_1)}{\vv_2}\vert  & =\; \vert\ldot{(a_1\ym{\vv_1} -b_1 \yp{\vv_1})}{\vv_2}\vert  \\\
\vert\ldot{(p_2-p_1)}{\vv_1}\vert  & =\; \vert\ldot{(a_2\ym{\vv_2}
-b_2 \yp{\vv_2})}{\vv_1}\vert .
\end{align*}
\end{proof}
\noindent Alternatively, $\CP(\vv_1,p_1)$ and $\CP(\vv_2,p_2)$ are
disjoint if and only if $p_2-p_1$ lies in the cone spanned by the
four vectors
\begin{equation*}
\ym{\vv_2},\; -\yp{\vv_2} ,\; - \ym{\vv_1},\; \yp{\vv_1}.
\end{equation*}
In fact, we allow $a_1=b_1=0$ or
$a_2=b_2=0$ since $p_2-p_1$ would still lie in the open cone.
If three of the four coefficients $a_i,b_i$ are zero, then the crooked
planes intersect in a single point, on the edges of the stems.

Assume now that $\vv_1,\vv_2\in\V$ are consistently oriented,
asymptotic vectors. Assume, without loss of generality:
\begin{equation*}
\ym{\vv_1}=\yp{\vv_2}.
\end{equation*}

\begin{thm}\label{thm:AsCPCP}
Let $\vv_1$ and $\vv_2$ be consistently oriented, asymptotic
vectors  such that $\ym{\vv_1}=\yp{\vv_2}$, and
$p_1,p_2\in\E$. The crooked planes
$\CP(\vv_1,p_1)$ and $\CP(\vv_2,p_2)$ are disjoint if and only if:
\begin{align}
&\ldot{(p_2-p_1)}{\vv_1} \;<\; 0, \notag \\
&\ldot{(p_2-p_1)}{\vv_2} \;<\; 0, \notag\\
&\ldot{(p_2-p_1)}{(\yp{\vv_1} \boxtimes \ym{\vv_2})} \;>\; 0.
\label{eq:asymptotic}
\end{align}
\end{thm}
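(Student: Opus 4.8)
The plan is to choose coordinates adapted to the shared null direction, reduce the three inequalities in \eqref{eq:asymptotic} to sign conditions on the coordinates of $p_2-p_1$, and then settle disjointness by a short case analysis over the pieces of the two crooked planes. Since $\CP$-disjointness depends only on $p := p_2 - p_1$ (crooked planes transform equivariantly under the translations in $\iso$), I would first translate so that $p_1 = 0$. Because $\vv_1,\vv_2$ span distinct asymptotic geodesics we have $\vv_1^\perp \neq \vv_2^\perp$, so the three null vectors $\yp{\vv_1}$, $\e := \ym{\vv_1} = \yp{\vv_2}$, and $\ym{\vv_2}$ are pairwise non-proportional and form a basis of $\V$. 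I then expand $p = \lambda\,\yp{\vv_1} + \mu\,\e + \nu\,\ym{\vv_2}$.

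Next I would show that the three functionals in \eqref{eq:asymptotic} are, up to sign-definite scalar factors, the coordinate functionals $\nu,\lambda,\mu$. Using $\ldot{\yp{\vv_i}}{\vv_i} = \ldot{\ym{\vv_i}}{\vv_i} = 0$ together with $\ldot{\vv_1}{\e} = \ldot{\vv_2}{\e} = 0$ gives $\ldot{p}{\vv_1} = \nu\,\ldot{\ym{\vv_2}}{\vv_1}$ and $\ldot{p}{\vv_2} = \lambda\,\ldot{\yp{\vv_1}}{\vv_2}$, whose multipliers are strictly negative by consistent orientation (Definition~\ref{def:co}). By Lemma~\ref{lem:crossprod}, $\ldot{p}{(\yp{\vv_1}\boxtimes\ym{\vv_2})} = \mu\,\Det(\e,\yp{\vv_1},\ym{\vv_2})$, a nonzero multiple of $\mu$. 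Hence \eqref{eq:asymptotic} is equivalent to requiring $p$ to lie in a fixed open octant of the null basis, and it suffices to prove that \emph{this} octant is exactly the set of $p$ for which $\CP(\vv_1,0)$ and $\CP(\vv_2,p)$ are disjoint.

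For the disjointness analysis I would decompose each crooked plane into its two wings and its stem and inspect the nine pairs of pieces. The feature special to the asymptotic case is that the wings $0 + \pp(\e)$ and $p + \pp(\e)$ lie in the \emph{parallel} degenerate planes $\e^\perp$ and $p + \e^\perp$, rather than in transverse planes as in the ultraparallel situation of Theorem~\ref{thm:UltraCPCP}. Consistent orientation forces the half-planes $\HS(\vv_1)$ and $\HS(\vv_2)$ in $\Ht$ to have disjoint interiors sharing the common ideal endpoint $\e$, and via Lemma~\ref{lem:halfspace} and Lemma~\ref{lem:inhalfspace} this eliminates most of the pairings automatically; the surviving interactions involve the two stems and the two coplanar-direction wings. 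The conditions $\ldot{p}{\vv_1}<0$ and $\ldot{p}{\vv_2}<0$ place each vertex on the correct side of the other plane and govern the stem crossings, while the cross-product inequality on $\mu$ governs the collision of the wings $\pp(\e)$. For sufficiency I would check that in the chosen octant no surviving pair meets; for necessity I would negate one inequality at a time and exhibit an explicit common point (a stem-plane crossing when one of the first two fails, a wing overlap inside $\e^\perp$ when the third fails), the boundary equalities yielding tangential contact.

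The main obstacle will be the wing--wing interaction of $0 + \pp(\e)$ with $p + \pp(\e)$. Because these half-planes sit inside parallel degenerate planes, the transversality arguments available for ultraparallel vectors degenerate, and one must track the internal half-plane structure of $\pp(\e)\subset\e^\perp$ to see precisely how the sign of $\ldot{p}{(\yp{\vv_1}\boxtimes\ym{\vv_2})}$ --- equivalently the sign of the $\e$-coordinate $\mu$ of the offset --- separates them. This degeneracy is exactly why three inequalities are needed here, whereas the single inequality \eqref{eq:DisUltraCPCP} sufficed in the ultraparallel case.
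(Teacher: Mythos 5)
First, a point of reference: the paper does not actually prove Theorem~\ref{thm:AsCPCP} --- it imports it from Drumm--Goldman~\cite{DrummGoldman1} and only proves the derived Corollary~\ref{cor:seammoveII} --- so your proposal is being measured against that source rather than against an argument in this text. Your opening reduction is correct, and is essentially the computation of Corollary~\ref{cor:seammoveII} run backwards: writing $p_2-p_1$ in the null basis $\yp{\vv_1}$, $\ym{\vv_1}=\yp{\vv_2}$, $\ym{\vv_2}$, the three functionals in \eqref{eq:asymptotic} are sign-definite multiples of the three coordinates (with strict negativity of $\ldot{\ym{\vv_2}}{\vv_1}$ and $\ldot{\yp{\vv_1}}{\vv_2}$ following once you exclude $\vv_2=\pm\vv_1$), so the inequalities cut out a single open octant. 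That part is fine and matches the paper's remark that disjointness corresponds to $p_2-p_1$ lying in a cone on three rays.

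The gap is in the disjointness analysis itself, which is the entire content of the theorem and which you only outline. Two concrete problems. (i) Lemma~\ref{lem:inhalfspace} takes disjointness of the two crooked planes as a \emph{hypothesis} and concludes which half-space contains which; you cannot invoke it to ``eliminate pairings'' while trying to establish disjointness --- that is circular. (ii) More seriously, the interaction you single out as the crux --- the wings $\pp(\ym{\vv_1})$ and $(p_2-p_1)+\pp(\yp{\vv_2})$ in the shared null direction --- is provably not governed by the third inequality. Write $\vx:=\ym{\vv_1}=\yp{\vv_2}$ and $p_2-p_1=\lambda\,\yp{\vv_1}+\mu\,\vx+\nu\,\ym{\vv_2}$. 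The half-plane $\pp(\vx)$ is a connected component of $\vx^\perp\setminus\langle\vx\rangle$ and is therefore invariant under translation by $\mu\vx$, so the relative position of the two $\vx$-wings depends only on $\lambda$ and $\nu$, not on $\mu$; moreover these wings lie in the parallel null planes $\vx^\perp$ and $(p_2-p_1)+\vx^\perp$, which are already disjoint whenever $\ldot{(p_2-p_1)}{\vx}=\lambda\,\ldot{\yp{\vv_1}}{\vx}+\nu\,\ldot{\ym{\vv_2}}{\vx}\neq 0$ --- automatic once $\lambda,\nu>0$. So the sign of $\ldot{(p_2-p_1)}{(\yp{\vv_1}\boxtimes\ym{\vv_2})}$ cannot be what separates them; its actual geometric role is in positioning the stems and the outer wings $\pp(\yp{\vv_1})$ and $\pp(\ym{\vv_2})$. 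Until the piece-by-piece intersections are actually computed, with the correct attribution of which inequality controls which pair and with both directions of the equivalence verified, this remains a plausible plan rather than a proof, and the plan as stated aims its main effort at a configuration that turns out to be vacuous.
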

\noindent As in the ultraparallel case, Theorem~\ref{thm:AsCPCP}
provides criteria for when $\CP(\vv_1,p_1)$ and
$\CP(\vv_2,p_2)$ are disjoint.

\begin{cor}\label{cor:seammoveII}
Let $\vv_1, \vv_2\in\V$ be consistently oriented, asymptotic
vectors such that $\ym{\vv_1}=\yp{\vv_2}$. Suppose
\begin{equation*}
p_i=a_i\ym{\vv_i} -b_i \yp{\vv_i},
\end{equation*}
where $a_i,b_i>0$ for $i=1,2$.
Then $\CP(\vv_1,p_1)$ and $\CP(\vv_2,p_2)$ are disjoint.
\end{cor}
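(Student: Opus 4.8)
The plan is to verify directly the three disjointness conditions supplied by Theorem~\ref{thm:AsCPCP}, exactly as Corollary~\ref{cor:seammove} verified the criterion of Theorem~\ref{thm:UltraCPCP} in the ultraparallel case; the shared ideal endpoint $\ym{\vv_1}=\yp{\vv_2}$ is what makes the computation collapse. Rescaling as there, I would assume $\vv_1,\vv_2$ are unit spacelike (a positive rescaling changes neither $\ypm{\vv_i}$ nor the points $p_i$, which are expressed through $\ypm{\vv_i}$). Set $\mathsf{n}:=\ym{\vv_1}=\yp{\vv_2}$. Expanding the points and substituting this relation gives
\[
p_2-p_1 \;=\; a_2\ym{\vv_2} + b_1\yp{\vv_1} - (a_1+b_2)\,\mathsf{n}.
\]
I will freely use $\ldot{\vv_i}{\ypm{\vv_i}}=0$, the nullity $\ldot{\ypm{\vv_i}}{\ypm{\vv_i}}=0$, and Definition~\ref{def:co}, which gives $\ldot{\vv_i}{\ypm{\vv_j}}\le 0$ for $i\ne j$. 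Because $\vv_1$ and $\vv_2$ determine distinct geodesics sharing a single ideal point, $\ym{\vv_2}$ is proportional to neither $\yp{\vv_1}$ nor $\mathsf{n}$, and likewise $\yp{\vv_1}$ to neither $\ym{\vv_2}$ nor $\mathsf{n}$; hence the inequalities $\ldot{\vv_1}{\ym{\vv_2}}<0$ and $\ldot{\vv_2}{\yp{\vv_1}}<0$ are strict.

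For the first two conditions, pair $p_2-p_1$ against $\vv_1$ and against $\vv_2$. In $\ldot{(p_2-p_1)}{\vv_1}$ the $\yp{\vv_1}$-term vanishes by orthogonality and the $\mathsf{n}$-term vanishes since $\mathsf{n}=\ym{\vv_1}$ is orthogonal to $\vv_1$, leaving $\ldot{(p_2-p_1)}{\vv_1}=a_2\,\ldot{\ym{\vv_2}}{\vv_1}<0$. Symmetrically, in $\ldot{(p_2-p_1)}{\vv_2}$ the $\ym{\vv_2}$-term and the $\mathsf{n}$-term (now $\mathsf{n}=\yp{\vv_2}$ orthogonal to $\vv_2$) vanish, leaving $\ldot{(p_2-p_1)}{\vv_2}=b_1\,\ldot{\yp{\vv_1}}{\vv_2}<0$. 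These are precisely the first two inequalities of Theorem~\ref{thm:AsCPCP}.

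The third inequality $\ldot{(p_2-p_1)}{(\yp{\vv_1}\boxtimes\ym{\vv_2})}>0$ is the heart of the matter, and the step I expect to require the most care. Pairing $\yp{\vv_1}\boxtimes\ym{\vv_2}$ against the three summands of $p_2-p_1$ and rewriting each pairing as a determinant via the defining property of $\boxtimes$, the summands $a_2\ym{\vv_2}$ and $b_1\yp{\vv_1}$ contribute $\Det(\ym{\vv_2},\yp{\vv_1},\ym{\vv_2})$ and $\Det(\yp{\vv_1},\yp{\vv_1},\ym{\vv_2})$, which vanish because a vector is repeated. Only the $\mathsf{n}$-summand survives:
\[
\ldot{(p_2-p_1)}{(\yp{\vv_1}\boxtimes\ym{\vv_2})} \;=\; -(a_1+b_2)\,\Det(\ym{\vv_1},\yp{\vv_1},\ym{\vv_2}).
\]

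To sign the surviving determinant I would observe that $\ym{\vv_1}\boxtimes\yp{\vv_1}$ is Lorentz-orthogonal to both $\ypm{\vv_1}$, hence lies in $(\vv_1^\perp)^\perp=\R\vv_1$; writing $\ym{\vv_1}\boxtimes\yp{\vv_1}=\lambda\vv_1$, positivity of the oriented basis $(\ym{\vv_1},\yp{\vv_1},\vv_1)$ forces $\lambda=\Det(\ym{\vv_1},\yp{\vv_1},\vv_1)=\ldot{(\ym{\vv_1}\boxtimes\yp{\vv_1})}{\vv_1}>0$. Then, using the cyclic symmetry of $\Det$ from Lemma~\ref{lem:crossprod},
\[
\Det(\ym{\vv_1},\yp{\vv_1},\ym{\vv_2}) \;=\; \ldot{\ym{\vv_2}}{(\ym{\vv_1}\boxtimes\yp{\vv_1})} \;=\; \lambda\,\ldot{\ym{\vv_2}}{\vv_1} \;<\; 0,
\]
so the displayed quantity is a positive multiple of $-\ldot{\ym{\vv_2}}{\vv_1}>0$. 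All three conditions of Theorem~\ref{thm:AsCPCP} then hold, and $\CP(\vv_1,p_1)$ and $\CP(\vv_2,p_2)$ are disjoint. Apart from this sign computation, the only point needing justification is the strictness recorded in the first paragraph; everything else is forced by the collapse $\ym{\vv_1}=\yp{\vv_2}$.
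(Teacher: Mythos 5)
Your proof is correct and follows essentially the same route as the paper's: it verifies the three conditions of Theorem~\ref{thm:AsCPCP} by direct computation, using the collapse $\ym{\vv_1}=\yp{\vv_2}$ and the identity $\ym{\vv}\boxtimes\yp{\vv}=\lambda\vv$ with $\lambda>0$ (the paper's $\kappa^2$). You merely combine the two surviving terms in the third inequality before signing them, and you make explicit the strictness and orientation facts that the paper leaves implicit.
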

\begin{proof}
Set
\begin{equation*}
\ym{\vv_i}\boxtimes\yp{\vv_i}=\kappa_i^2\vv_i,
\end{equation*}
for  $i=1,2$. Then:
\begin{align*}
\ldot{(p_2-p_1)}{\vv_1} & =a_2\ldot{\ym{\vv_2}}{\vv_1}<0 \\
\ldot{(p_2-p_1)}{\vv_2} & =b_1\ldot{\yp{\vv_1}}{\vv_2}<0
\end{align*}
and:
\begin{align}\label{eq:asy3}
\ldot{(p_2-p_1)}{\left(\yp{\vv_1} \boxtimes\ym{\vv_2}\right)}
&=\;
-b_2\ldot{\yp{\vv_2}}{\left( \yp{\vv_1}\boxtimes\ym{\vv_2}\right)}
\,-\,  a_1\ldot{\ym{\vv_1}}{\left( \yp{\vv_1}\boxtimes\ym{\vv_2}\right)}\notag \\
&=\; -b_2\kappa_2^2 \left( \ldot{\ym{\vv_1}}{\vv_2}\right)
\,-\, a_1\kappa_1^2\left( \ldot{\yp{\vv_2}}{\vv_1}\right) \notag\\
&>\; 0.
\end{align}
\end{proof}
\noindent As in the ultraparallel case, we obtain disjoint crooked
planes if and only if $p_2-p_1$ lies in a cone spanned by three
rays. In Equation~\eqref{eq:asy3}, we allow $b_2=0$ or $a_1=0$
simply because $\ym{\vv_1}=\yp{\vv_2}$.  If $a_2=0$, $b_1=0$ or
$a_1=b_2=0$, then the crooked planes intersect in a null ray.

\section{Crooked fundamental domains}\label{sub:crookedfd}

Now  look at how collections of pairwise disjoint crooked planes
correspond to groups acting properly on $\E$.
Let $\vv$, $\vv'\in\V$ be two spacelike vectors. Suppose
$\gamma\in\iso$ and $p,p'\in\E$ satisfy:
\begin{equation*}
\gamma(\CP(\vv,p))=\CP(\vv',p').
\end{equation*}

Then $\gamma(p)=p'$ and $\LL(\gamma)(\vv)$ is a scalar multiple of
$\vv'$.  In particular, $\gamma\big(\H(\vv,p)\big)$ is one of the
two crooked half-spaces bounded by $\CP(\vv',p')$.

\begin{thm}\label{thm:disjointCPs}
Suppose that
$\H(\vv_i,p_i)$ are $2n$ pairwise disjoint crooked half-spaces and $\gg_1, \ldots \gg_n \in\G$ such that for
all $i$,
\begin{equation*}
\gamma_i\left(\H(\vv_{-i},p_{-i})\right) \;=\;
\E\setminus \mathsf{int} \left(\H(\vv_{i},p_i)\right).
\end{equation*}
Then
$\langle \gg_1, \ldots \gg_n \rangle$ acts freely and properly on
$\E$ with fundamental domain
\begin{equation*}
\E \setminus \bigcup_{-n\leq i\leq n}\mathsf{int}\left(
\H(\vv_i,p_i) \right) .
\end{equation*}
\end{thm}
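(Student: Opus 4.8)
The plan is to prove Theorem~\ref{thm:disjointCPs} by a ping-pong (Klein combination) argument, treating the $2n$ crooked half-spaces as the regions of a table-tennis table and the $\gg_i$ as the paddles. First I would reformulate the face-pairing hypothesis symmetrically. Writing $H_i := \H(\vv_i,p_i)$ and using the union--intersection relations $\H(\vv,p)\cup\H(-\vv,p)=\E$ and $\H(\vv,p)\cap\H(-\vv,p)=\CP(\vv,p)$ from Section~\ref{sec:cp}, which give $\E\setminus\mathsf{int}(H_i)=\H(-\vv_i,p_i)$, the hypothesis $\gg_i(H_{-i})=\E\setminus\mathsf{int}(H_i)$ says that each $\gg_i$ carries the closed half-space $H_{-i}$ onto the closed complementary half-space of $H_i$. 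Setting $\gg_{-i}:=\gg_i^{-1}$ for $1\le i\le n$, this becomes the single symmetric relation $\gg_i(\E\setminus\mathsf{int}(H_{-i}))=H_i$ for every index $i\in\{\pm1,\dots,\pm n\}$; taking interiors gives $\gg_i(\E\setminus H_{-i})=\mathsf{int}(H_i)$.

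Next come the ping-pong inclusions. Because the $2n$ closed half-spaces are pairwise disjoint, for any $j\neq -i$ we have $H_j\subset\E\setminus H_{-i}$, hence $\gg_i(H_j)\subset H_i$ and $\gg_i(\mathsf{int}(H_j))\subset\mathsf{int}(H_i)$. Writing $D:=\E\setminus\bigcup_i\mathsf{int}(H_i)$ for the candidate fundamental domain and $D^\circ:=\mathsf{int}(D)=\E\setminus\bigcup_i H_i$, the same disjointness gives $D^\circ\subset\E\setminus H_{-i}$, so $\gg_i(D^\circ)\subset\mathsf{int}(H_i)$. An induction on word length then yields the key claim: for every reduced word $w=\gg_{i_1}\cdots\gg_{i_k}$ with $k\ge1$ one has $w(D^\circ)\subset\mathsf{int}(H_{i_1})$. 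The inductive step uses that a reduced word satisfies $i_2\neq -i_1$, so by induction $w'(D^\circ)\subset\mathsf{int}(H_{i_2})\subset\E\setminus H_{-i_1}$ lands in precisely the domain on which $\gg_{i_1}$ contracts into $\mathsf{int}(H_{i_1})$.

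From this claim the algebraic and disjointness statements are immediate. Since $D^\circ\cap\mathsf{int}(H_{i_1})=\emptyset$ and $D^\circ\neq\emptyset$, no nontrivial reduced word fixes $D^\circ$, so the $\gg_i$ generate a free group and $\gg(D^\circ)\cap D^\circ=\emptyset$ for every $\gg\neq\Id$, whence $\gg(D^\circ)\cap\gg'(D^\circ)=\emptyset$ whenever $\gg\neq\gg'$. Freeness of the action follows too: a nontrivial $\gg$ cannot fix an interior point of a tile, for $\gg(x)=x$ with $x\in\gg_0(D^\circ)$ would put $x\in\gg_0(D^\circ)\cap\gg\gg_0(D^\circ)$ and force $\gg=\Id$; and since the paired faces $\CP(\vv_{-i},p_{-i})$ and $\CP(\vv_i,p_i)$ are disjoint crooked planes, the side-pairings produce no fixed point on the boundary either.

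The remaining, and genuinely harder, step is to show that the tiles cover: $\bigcup_{\gg\in\G}\gg(D)=\E$. Here I would show that $\Omega:=\bigcup_{\gg}\gg(D)$ is both open and closed. Openness across a face $\gg(\CP(\vv_i,p_i))$ uses that the two closed crooked half-spaces on either side of a crooked plane exhaust $\E$, so $D$ together with the neighboring tile $\gg\gg_i(D)$ fills a neighborhood of the face. For closedness --- equivalently, local finiteness of the tiling, and hence properness --- the essential point is an escape property: the nested images $w(H_{i_1})$ of the half-spaces under longer and longer reduced words eventually leave every compact subset of $\E$, so that each compact set meets only finitely many tiles. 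This escape is the crux, and I expect it to be the main obstacle, since it is exactly where one must use more than the purely combinatorial disjointness. I would obtain it either by invoking the general fundamental-polyhedron theorem for crooked half-spaces of~\cite{Drumm1,Drumm2} (see~\cite{CG}), whose hypotheses have just been verified, or by exhibiting a contraction of the nested crooked half-spaces toward the ideal boundary. Granting the escape property, $\Omega$ is closed, hence $\Omega=\E$ by connectedness, properness follows from local finiteness, and $D$ is a fundamental domain for the free, proper action of $\langle\gg_1,\dots,\gg_n\rangle$.
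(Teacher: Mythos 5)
Your ping-pong skeleton is sound and in fact more explicit than what the paper writes down: the symmetric reformulation $\gg_i\big(\E\setminus\mathsf{int}(\H(\vv_{-i},p_{-i}))\big)=\H(\vv_i,p_i)$, the inclusion $w(D^\circ)\subset\mathsf{int}(\H(\vv_{i_1},p_{i_1}))$ for nontrivial reduced words, and the consequent freeness of the group and pairwise disjointness of the tiles are all correct. But you have correctly located, and then not closed, the real gap: everything in the theorem beyond freeness lives in the escape/covering step, and your primary way out --- invoking the fundamental-polyhedron theorem of \cite{Drumm1,Drumm2} ``whose hypotheses have just been verified'' --- does not go through as stated. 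Those references prove the result under an additional geometric hypothesis: for each hyperbolic generator $\gg_i$, the direction vectors $\vv_{\pm i}$ must \emph{cross} the neutral fixed vector $\vo{g_i}$. The pairing hypothesis of the present theorem only forces $\vv_{\pm i}$ to cross \emph{or be parallel to} $\vo{g_i}$, and the parallel (asymptotic) configuration is exactly the one produced by the ideal-triangle lamination used later in the proof of Theorem~A, so it cannot be excluded. The paper's own proof consists precisely of this observation together with the claim that the methods of \cite{Drumm1,Drumm2} extend to the parallel case, the key estimate being that the compression of a tubular neighborhood around lines touching a boundary crooked plane at a point, in suitable transverse directions, is bounded from below.

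Consequently your second suggested route --- a direct contraction/escape estimate for nested images of crooked half-spaces --- is the one that must actually be carried out, and it is not automatic: in the asymptotic configuration the nested half-spaces degenerate toward a null direction, and one needs a quantitative lower bound (not merely the qualitative remark that disjointness is an open condition) to obtain local finiteness of the tiling, hence properness and $\bigcup_{\gg}\gg(D)=\E$. As written, your argument establishes that $\langle\gg_1,\dots,\gg_n\rangle$ is free and that the translates $\gg(D^\circ)$ are pairwise disjoint, but not that the action is proper or that $D$ is a fundamental domain.
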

\begin{proof}
By the assumption
\begin{equation*}
\gamma_i\left(\H(\vv_{-i},p_{-i})\right)
\,=\, \E\,\setminus\mathsf{int} \left(\H(\vv_{i},p_i)\right),
\end{equation*}
the vectors $\vv_{\pm i}$ either cross or are parallel to
$\vo{g_i}$. The theorem is shown in \cite{Drumm1, Drumm2}, assuming, in the case of hyperbolic
$\gg_i$, that the vector $\vv_{i}$ crosses the
fixed vector $\vo{g_i}$. (The vectors $\vv_i$
are parallel to $\vo{g_i}$ for parabolic $\gg_i$.)

However, the methods used in \cite{Drumm1, Drumm2} extend
to the case of hyperbolic generators with $\vv_{\pm i}$ parallel
to $\vo{g_i}$. In particular, the compression of a tubular
neighborhood around lines which touch a boundary crooked plane at
a point in particular transverse directions is bounded from below.
\end{proof}

These fundamental domains notably differ from the standard construction
(as in \cite{Drumm2}).
A crooked fundamental domain $\Delta$ in $\E$ for $\Gamma$ determines
a polygon $\delta$ in $\Ht$ for $\LL(\Gamma)$;
the stems of $\partial\Delta$ define lines in $\Ht$ bounding $\delta$.
However, while $\Gamma\cdot\Delta \,=\, \E$,
the union $\LL(\Gamma)\cdot\delta$ may only be a {\em proper\/} open subset of $\Ht$.
In the present case, this is the universal covering of the interior of the
convex core of $\Sigma$.
The convex core is an incomplete hyperbolic surface bounded
by three closed geodesics.
In contrast, the flat Lorentz manifold $\E/\Gamma$ is {\em complete.}
While the hyperbolic fundamental domains $\LL(\gamma)(\delta)$ only fill
a proper subset of $\Ht$, the crooked fundamental domains $\gamma(\Delta)$ fill
all of $\E$.

Theorem~\ref{thm:disjointCPs} extends to the case when two of the
crooked planes intersect in a single point.

\begin{lemma}
\label{lem:kissing1}
Let $\vv_{-2},\vv_{-1},\vv_1,\vv_2\in\V$ be consistently oriented vectors
and
suppose
$p_{-1},p_1,p_2\in\E$ satisfy:
\begin{align*}
\CP(\vv_{-2},p_{-1})\,\cap\,\CP(\vv_2,p_2) &=\;\emptyset\\
\CP(\vv_{-1},p_{-1})\,\cap\,\CP(\vv_1,p_1) &=\;\emptyset \\
\CP(\vv_{1},p_{1})\,\cap\,\CP(\vv_2,p_2) &=\;\emptyset .
\end{align*}
Then there exists  $p_{-2}\in\E$ such that the
crooked planes
$\CP(\vv_{-2},p_{-2})$ and
$\CP(\vv_2,\gamma_2(p_{-2}))$ are each
disjoint from $\CP(\vv_1,p_1)$.
\end{lemma}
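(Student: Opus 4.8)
The plan is to reduce both requirements to the explicit cone criteria of Corollaries~\ref{cor:seammove} and~\ref{cor:seammoveII}, and then to locate $p_{-2}$ inside an intersection of two open cones. Write $K=\CP(\vv_1,p_1)$. Since $\vv_{-2},\vv_{-1},\vv_1,\vv_2$ are consistently oriented, the pairs $(\vv_{-2},\vv_1)$ and $(\vv_2,\vv_1)$ are each ultraparallel or asymptotic, so Theorems~\ref{thm:UltraCPCP} and~\ref{thm:AsCPCP} express the two desired statements as open conditions: $\CP(\vv_{-2},p_{-2})$ is disjoint from $K$ exactly when $p_{-2}-p_1$ lies in an open convex cone $C_A$ determined by the null vectors $\ypm{\vv_{-2}},\ypm{\vv_1}$ (explicitly, in the ultraparallel case, the cone on $\ym{\vv_{-2}},\,-\yp{\vv_{-2}},\,-\ym{\vv_1},\,\yp{\vv_1}$), and $\CP(\vv_2,\gamma_2(p_{-2}))$ is disjoint from $K$ exactly when $\gamma_2(p_{-2})-p_1$ lies in the analogous cone $C_B$. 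Thus I must produce $p_{-2}\in(p_1+C_A)\cap\gamma_2^{-1}(p_1+C_B)$.

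For the base point I would take $p_{-2}^{0}:=\gamma_2^{-1}(p_2)$, so that $\CP(\vv_2,\gamma_2(p_{-2}^{0}))=\CP(\vv_2,p_2)$ and the hypothesis $\CP(\vv_1,p_1)\cap\CP(\vv_2,p_2)=\emptyset$ says precisely that $p_2-p_1\in C_B$; that is, $p_{-2}^{0}$ satisfies the requirement on $\CP(\vv_2,\gamma_2(p_{-2}))$ with \emph{strict} inequalities. Since $C_B$ is open, the same holds for every $p_{-2}$ in a full neighborhood of $p_{-2}^{0}$. The remaining task is then to find, arbitrarily near $p_{-2}^{0}$, a point for which $p_{-2}-p_1\in C_A$.

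The hard part will be showing that $p_{-2}^{0}-p_1\in\overline{C_A}$, i.e. that $\CP(\vv_{-2},\gamma_2^{-1}(p_2))$ and $K$ are disjoint or merely tangent; once this is known, any sufficiently small perturbation of $p_{-2}^{0}$ into the open cone $C_A$ stays inside $p_1+C_B$ by the previous paragraph and proves the lemma. To establish the tangency I would transport the hypothesis $\CP(\vv_{-2},p_{-1})\cap\CP(\vv_2,p_2)=\emptyset$ by the isometry $\gamma_2$ and compare it with the inequalities of Theorem~\ref{thm:AsCPCP} (or~\ref{thm:UltraCPCP}) cutting out $\overline{C_A}$: that hypothesis fixes the signs of $\ldot{(p_2-p_{-1})}{\vv_{-2}}$, of $\ldot{(p_2-p_{-1})}{\vv_2}$, and of the relevant triple product, and these transform under $\gamma_2$ into the weak inequalities defining $\overline{C_A}$. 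The hypothesis $\CP(\vv_{-1},p_{-1})\cap\CP(\vv_1,p_1)=\emptyset$, together with the fact that $\CP(\vv_{-1},p_{-1})$ and $\CP(\vv_{-2},p_{-1})$ share the vertex $p_{-1}$ and hence (being asymptotic) meet only along a null ray, supplies the one remaining inequality. Consistent orientation (Definition~\ref{def:co}) guarantees that all the inner products in question carry the correct sign.

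The reason the argument cannot be collapsed into a single seam move — and the source of the difficulty — is that the two requirements pull against each other. Orientation forces $\LL(\gamma_2)$ to interchange the two null directions of $\vv_{-2}$ with those of $\vv_2$, so the vertex displacements $a\ym{\vv_{-2}}-b\yp{\vv_{-2}}$ that drive $\CP(\vv_{-2},p_{-2})$ away from $K$ are carried by $\gamma_2$ to displacements of the form $-b'\ym{\vv_2}+a'\yp{\vv_2}$, which drive $\CP(\vv_2,\gamma_2(p_{-2}))$ \emph{toward} $K$. This tension is exactly why one must anchor at a point that is strictly interior for the $\CP(\vv_2,\cdot)$ condition and only tangent for the $\CP(\vv_{-2},\cdot)$ condition, and then perturb; I expect verifying that anchoring — the $\overline{C_A}$-membership above — to be the crux of the proof.
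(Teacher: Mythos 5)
Your overall anchor-and-perturb strategy has the same general shape as the paper's argument, but the step you yourself flag as the crux --- that the anchor $p_{-2}^{0}=\gamma_2^{-1}(p_2)$ satisfies $p_{-2}^{0}-p_1\in\overline{C_A}$ --- is not established, and the mechanism you propose for it cannot work as described. Transporting the hypothesis $\CP(\vv_{-2},p_{-1})\cap\CP(\vv_2,p_2)=\emptyset$ by $\gamma_2^{-1}$ yields sign conditions on $\gamma_2^{-1}(p_2)-\gamma_2^{-1}(p_{-1})$ paired against $\vv_{-2}$ and $\LL(\gamma_2)^{-1}\vv_{-2}$ and their null vectors; it says nothing about $\gamma_2^{-1}(p_2)-p_1$ paired against $\vv_{-2}$ and $\vv_1$, which is what membership in $\overline{C_A}$ requires. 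The vector $\vv_1$ simply does not occur in the transported inequalities. (In the intended normalization $\gamma_2(p_{-1})=p_2$ your anchor collapses to $p_{-1}$, and the needed claim becomes that $\CP(\vv_{-2},p_{-1})$ does not cross $\CP(\vv_1,p_1)$; that is plausible, but it must come from the shared vertex at $p_{-1}$ and the hypothesis on $\CP(\vv_{-1},p_{-1})$ --- the ``kissing'' configuration, which your argument never uses --- not from the first hypothesis via $\gamma_2$.)

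The paper closes exactly this gap by a different device: it anchors at $p_{-1}$, introduces the smallest crooked half-space $\H(\vv_0,p_{-1})$ containing both $\H(\vv_{-2},p_{-1})$ and $\H(\vv_{-1},p_{-1})$, and performs the seam move $p_{-2}=p_{-1}+a\ym{\vv_{-2}}-b\yp{\vv_{-2}}$ of Corollaries~\ref{cor:seammove} and~\ref{cor:seammoveII} so that $\CP(\vv_{-2},p_{-2})$ is disjoint from $\CP(\vv_0,p_{-1})$; Lemma~\ref{lem:inhalfspace} then traps $\CP(\vv_{-2},p_{-2})$ inside $\H(\vv_0,p_{-1})$, which is disjoint from $\H(\vv_1,p_1)$ and $\H(\vv_2,p_2)$, so the cone condition for the pair $(\vv_{-2},\vv_1)$ never has to be verified directly. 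The openness-plus-rescaling step keeping $\gamma_2(p_{-2})$ within $\epsilon$ of $p_2$ is the same perturbation you propose and is fine. Your proposal is therefore recoverable only if you replace the $\gamma_2$-transport argument by a containment argument of this kind (or by a direct proof that $\CP(\vv_{-2},p_{-1})$ and $\CP(\vv_1,p_1)$ do not cross); as written, the crux is asserted rather than proved.
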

\begin{proof}
Let $\H(\vv_0,p_{-1})$ be the smallest crooked half-space containing
both $\H(\vv_{-2},p_{-1})$ and $\H(\vv_{-1},p_{-1})$.  Then
\begin{equation*}
\H(\vv_0,p_{-1}),\;\H(\vv_1,p_{1}),\; \H(\vv_2,p_{2})
\end{equation*}
are pairwise disjoint. Disjointness of crooked planes is an open
condition. Therefore there exists $\epsilon>0$ such that for any
$\vu\in\V$ of Euclidean norm less than $\epsilon$, the crooked
plane $\CP(\vv_{2},p_{2}+\vu)$ remains disjoint from
$\CP(\vv_{0},p_{-1})$ and $\CP(\vv_{1},p_{1})$.
Corollaries~\ref{cor:seammove} and~\ref{cor:seammoveII} imply the
existence of a $p_{-2}$ such that $\CP(\vv_{-2},p_{-2})$ is
disjoint from both $\CP(\vv_{0},p_{-1})$ and
$\CP(\vv_{-1},p_{-1})$. The set of choices being closed under
positive rescaling, one can choose $p_{-2}$ close enough to
$p_{-1}$ so that $\gamma_2(p_{-2})$ is within an
$\epsilon$-neighborhood of $p_2$.

Lemma~\ref{lem:inhalfspace} implies:
\begin{equation*}
\CP(\vv_{-2},p_{-2})\subset\H(\vv_0,p_{-1}).
\end{equation*}
In particular,
$\CP(\vv_{-2},p_{-2})$ is disjoint from
each
$\CP(\vv_1,p_1)$ and
$\CP(\vv_2,\gamma_2(p_{-2}))$
as claimed.
\end{proof}

\section
{The space of proper affine deformations} \label{sec:cocycles}

Recall the presentation of the fundamental group of $\THS$ in
Equation~\eqref{eq:present}.    We parametrize the space of
translational conjugacy classes $\HH$ of affine deformations of
$\G_0$ by Margulis invariants corresponding to $g_1$, $g_2$,
$g_3$.  Positivity of the three signs will guarantee a triple of
crooked planes arising from the lamination described
in~\S\ref{sec:THS}. (Alternatively, if the signs are all negative,
use negatively extended crooked planes~\cite{DrummGoldman1} as
mentioned in~\S\ref{sec:cp}.) The existence of such a crooked
polyhedron thereby completes the proof of Theorem~A.

We begin with the parametrization of  $\HH$.
\begin{lemma}\label{lem:muiso}
Let $\pi$ denote a free group of rank two with presentation
\begin{equation*}
\langle A_1, A_2, A_3 \mid A_1 A_2 A_3  \,=\, \Id \rangle.
\end{equation*}
Let $\pi\xrightarrow{\rho_0}\soto$ be a homomorphism such that
$\rho_0(A_i) \,\in\,  \Hyp_0 \cup \Par_0$ for $i=1,2,3$. 
Suppose that
$\rho(\pi)$ is not solvable. For each $i$ choose a vector
$\vv_i\in \Fix\big(\rho_0(A_i)\big)$ positive with respect to
$\rho_0(A_i)$ and define
\begin{align*}
\HH &\xrightarrow{\mu_i} \R \\
[u] &\longmapsto\; \na{\vv_i}(\rho(A_i)) \,=\,
u(A_i) \cdot \vv_i
\end{align*}
where $\rho$ is the affine deformation corresponding to $u$.
Then
\begin{align*}
\HH & \xrightarrow{\mu} \R^3 \\
\mu: [u] & \longmapsto
\bmatrix \mu_1([u]) \\ \mu_2([u]) \\ \mu_3([u]) \endbmatrix
\end{align*}
is a linear isomorphism of vector spaces.
\end{lemma}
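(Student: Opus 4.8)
The plan is to establish that $\mu$ is a linear isomorphism by first verifying linearity, then computing the dimension of $\HH$ to show it equals $3$, and finally proving that the three functionals $\mu_1,\mu_2,\mu_3$ are linearly independent. Linearity is immediate from the closed formula $\mu_i([u]) = u(A_i)\cdot\vv_i$: the map $u\mapsto u(A_i)\cdot\vv_i$ is manifestly linear in the cocycle $u$, and it descends to cohomology because $\na{\vv_i}$ is a class function (as recorded in the facts following Definition~\ref{positive}, the invariant vanishes on coboundaries since a coboundary corresponds to an affine deformation fixing a point). Thus $\mu$ is a well-defined linear map $\HH\to\R^3$, and it suffices to show it is bijective.

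The dimension count is the conceptual heart. Since $\pi$ is free of rank two, the cocycle space $\ZZ$ has dimension $2\dim\V = 6$ (a cocycle is freely determined by its values on two free generators, say $A_1,A_2$). The coboundary space $B^1(\pi,\V)$ has dimension $\dim\V - \dim\V^{\pi}$, where $\V^{\pi}$ is the space of $\pi$-fixed vectors. Because $\rho_0(\pi)$ is nonsolvable, its image in $\soto$ is an irreducible (indeed Zariski-dense in $\soto$, or at least non-abelian with no common fixed vector) subgroup, so $\V^{\pi}=0$ and the coboundaries have dimension $3$. Therefore $\dim\HH = 6 - 3 = 3$, matching the target $\R^3$. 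With domain and codomain of equal finite dimension, bijectivity reduces to injectivity, i.e.\ to showing $\mu$ has trivial kernel.

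First I would prove injectivity directly by establishing that the three covectors $v\mapsto u(A_i)\cdot\vv_i$ are linearly independent as functionals on $\HH$, or equivalently that no nonzero class $[u]$ has $\mu_1([u])=\mu_2([u])=\mu_3([u])=0$. The relation $A_1A_2A_3=\Id$ forces the cocycle identity $u(A_3) = -\rho_0(A_3)\rho_0(A_2)\,u(A_1) - \rho_0(A_3)\,u(A_2)$ (using the cocycle rule $u(gh)=u(g)+\rho_0(g)u(h)$ and $\na{\vv_i}$-invariance), so $u(A_3)\cdot\vv_3$ is a definite linear combination of $u(A_1)$ and $u(A_2)$. Choosing $u(A_1),u(A_2)\in\V$ freely, the three conditions $\mu_i=0$ become three linear equations in the six coordinates of $(u(A_1),u(A_2))$, whose solution space modulo the three-dimensional coboundary space I expect to be zero. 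I would verify this by exhibiting, for each $i$, a coboundary or an explicit cocycle realizing $\mu_i\neq 0$ while the other two vanish, using that $\vv_i$ is a noneigenvector direction for the other generators.

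The main obstacle will be cleanly handling the parabolic case together with the hyperbolic case and pinning down $\V^{\pi}=0$ rigorously from nonsolvability. For hyperbolic $\rho_0(A_i)$ the fixed vector $\vv_i$ is spacelike and canonically normalized, but for parabolic elements $\Fix(\rho_0(A_i))$ is a null line and the geometry degenerates; I must check that the pairing $u(A_i)\cdot\vv_i$ still separates classes. The key leverage is that nonsolvability of $\rho_0(\pi)$ guarantees that the three fixed directions $\vv_1,\vv_2,\vv_3$ together span $\V$ (they cannot all be confined to a single invariant subspace without forcing a common eigenvector, contradicting irreducibility), which is precisely what makes the map $\mu$ nondegenerate. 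I would make this spanning property the crux of the injectivity argument, reducing everything to the linear-algebra fact that three functionals built from a spanning triple of directions, combined with the constraint from $A_1A_2A_3=\Id$, have trivial common kernel on the $3$-dimensional quotient $\HH$.
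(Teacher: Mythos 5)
Your setup is sound and coincides with the paper's: linearity and descent to cohomology are immediate, and the dimension count $\dim\HH = 2\dim\V-\dim\V = 3$ is exactly the paper's argument (nonsolvability forces $\V^{\pi}=0$, since the stabilizer in $\soto$ of any nonzero vector is abelian, so the coboundary map $\V\to\ZZ$ is injective). The gap lies in the only substantive step, the verification that $\mu$ has trivial kernel. You reduce it to a rank computation for a $3\times 6$ matrix and then write that you ``expect'' the solution space to be the coboundaries and ``would verify this by exhibiting'' suitable cocycles. That exhibition is the entire content of the lemma beyond the dimension count, and it is precisely what the paper's appendix supplies: it lifts $\rho_0$ to $\SLtr$, conjugates to an explicit normal form determined by the traces, identifies $\V$ with $\sltr$ so that each $\vv_i$ is the traceless projection of $\trho_0(A_i)$, and writes down a concrete cocycle with $\mu_1\neq 0$ and $\mu_2=\mu_3=0$, the other two cases following by cyclic symmetry. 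As written, your decisive step is asserted rather than proved.

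Moreover, the leverage you propose in its place --- that $\vv_1,\vv_2,\vv_3$ span $\V$ --- is neither established (three distinct fixed lines in a $3$-dimensional space can a priori be coplanar, e.g.\ the direction vectors of three geodesics with a common perpendicular; excluding this for a triple satisfying $A_1A_2A_3=\Id$ requires an argument) nor is it the condition that actually drives the proof. A correct version of your linear-algebra route runs as follows: from $A_1A_2A_3=\Id$ one gets $u(A_3)=-\rho_0(A_3)u(A_1)-\rho_0(A_3)\rho_0(A_1)u(A_2)$ (your displayed identity has the factors ordered for the presentation $g_3g_2g_1=\Id$ of Section 2 rather than the lemma's $A_1A_2A_3=\Id$); since $\rho_0(A_3)$ is a Lorentz isometry fixing $\vv_3$, this yields $\mu_3([u])=-u(A_1)\cdot\vv_3-u(A_2)\cdot\rho_0(A_1)^{-1}\vv_3$, so the three functionals on $\ZZ\cong\V\oplus\V$ are represented by $(\vv_1,0)$, $(0,\vv_2)$ and $\left(-\vv_3,\,-\rho_0(A_1)^{-1}\vv_3\right)$, and they can only be dependent if $\vv_3$ is proportional to $\vv_1$; but then $A_1$ and $A_3$ share a fixed line and $\rho_0(\pi)$ lies in the solvable stabilizer of that line, a contradiction. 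Either this computation or the paper's explicit cocycle must actually be carried out; what matters is not that the $\vv_i$ span but that no two of them are parallel.
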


Of course this lemma is much more general than our specific
application. In our application $\rho_0$ is an isomorphism of $\pi
= \pi_1(\Sigma)$ onto the discrete subgroup $\G_0\subset\soto$,
and corresponds to a complete hyperbolic three-holed sphere
$\mathsf{int}(\Sigma)$. The generators $A_1,A_2,A_3$ correspond to
the three components of $\partial\Sigma.$

The proof of Lemma~\ref{lem:muiso} is postponed to the Appendix.

As in \S\ref{sec:alpha}, choose a positive vector
$\vo{i}:=\vo{g_i}\in\Fix(g_i)$,  further requiring that $\vo{i}$
be unit spacelike when $g_i$ is hyperbolic.  With this fixed
choice of positive vectors:
\begin{equation*}
\mu_i([u])
\;=\alpha_{[u]}(g_i).
\end{equation*}

We will now show that every positive cocycle
$(\mu_1,\mu_2,\mu_3)\in\ZZ$ corresponds to a  triple of mutually
disjoint crooked planes arising from the geodesic lamination described
 in~\S\ref{sec:THS}.

By a slight abuse of notation, set $\vpm{i}=\ypm{(\vo{i})}$
and $\vp{i}=\vm{i}=\vo{i}$ when $g_i$ is parabolic.
The three consistently oriented unit spacelike vectors
\begin{equation*}
\vv_i\;=\;\frac{-1}{\vp{i}\cdot\vp{i+1}}\,\vp{i}\boxtimes\vp{i+1}
\end{equation*}
correspond to the arcs joining $\vp{i}$ to $\vp{i+1}$ in $\Ht$.

\begin{lemma}
For $i=1,2,3$, choose $a_i,b_i>0$.
For
\begin{equation*}
p_i\;:=\; a_i\vp{i}-b_i\vp{i+1}
\end{equation*}
the crooked planes $\CP(\vv_i,p_i)$ are pairwise disjoint.
\end{lemma}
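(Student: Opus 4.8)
The goal is to show that the three crooked planes $\CP(\vv_i, p_i)$, with $p_i = a_i \vp{i} - b_i \vp{i+1}$ and $a_i, b_i > 0$, are pairwise disjoint. The natural strategy is to check disjointness for each of the three pairs $(\vv_i, \vv_{i+1})$ separately, invoking Corollary~\ref{cor:seammove} or Corollary~\ref{cor:seammoveII} as appropriate. The key realization is that the disjointness corollaries are phrased in terms of vertex displacements of the form $a\ym{\vv} - b\yp{\vv}$ along the null directions $\ypm{\vv}$ of each individual crooked plane, whereas the vertices here are written in terms of the fixed-point directions $\vp{i}, \vp{i+1}$ of the generators. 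So the main work is a translation dictionary: for each pair, I would rewrite $p_{i+1} - p_i$ in the basis of null directions attached to $\vv_i$ and $\vv_{i+1}$ and verify the sign (cone-membership) conditions demanded by the relevant corollary.

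\medskip
\noindent\textbf{The key step: matching null directions to fixed-point directions.}

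The crucial structural fact is that the arc defining $\vv_i$ joins $\vp{i}$ to $\vp{i+1}$, so the two null endpoints of the geodesic dual to $\vv_i$ are precisely the directions $\vp{i}$ and $\vp{i+1}$. With the consistent orientation fixed by the formula $\vv_i = \tfrac{-1}{\vp{i}\cdot\vp{i+1}}\,\vp{i}\boxtimes\vp{i+1}$, I expect one of $\ym{\vv_i}, \yp{\vv_i}$ to be (a positive multiple of) $\vp{i}$ and the other to be (a positive multiple of) $\vp{i+1}$. Once this identification is pinned down, the vertex $p_i = a_i\vp{i} - b_i\vp{i+1}$ is exactly of the shape $a_i'\ym{\vv_i} - b_i'\yp{\vv_i}$ required in the hypotheses of Corollaries~\ref{cor:seammove} and~\ref{cor:seammoveII}, with $a_i', b_i' > 0$ (up to the positive rescaling noted there, which is harmless). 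I would determine the correct assignment by using Lemma~\ref{lem:xpxo} together with the orientation convention that $(\vv^-, \vv^+, \vv)$ is positively oriented, and by using Lemma~\ref{lem:crossprod} to compute the relevant inner products.

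\medskip
\noindent\textbf{Dispatching the three pairs.}

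For each adjacent pair $(i, i+1)$ the two vectors share the direction $\vp{i+1}$, so $\vv_i$ and $\vv_{i+1}$ are \emph{asymptotic}, sharing a common null endpoint; this is the situation governed by Corollary~\ref{cor:seammoveII} (after matching $\ym{\vv_i} = \yp{\vv_{i+1}}$ or the reverse, as the orientation dictates). Since both corollaries already contain the full disjointness verification once the vertices are in standard form, each pair reduces to confirming that the vertices translate into the required form with positive coefficients, which the identification above supplies directly. I would also note the degenerate cases: when some $g_i$ is parabolic, $\vp{i} = \vm{i} = \vo{i}$, and the corresponding arc terminates at a single ideal point; the corollaries already permit the relevant coefficient to vanish (the remark following Corollary~\ref{cor:seammoveII} explicitly allows $a_1 = 0$ or $b_2 = 0$ precisely because $\ym{\vv_1} = \yp{\vv_2}$), so the parabolic case is absorbed with no extra argument.

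\medskip
\noindent\textbf{The main obstacle.}

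The only genuinely delicate point is getting the orientation bookkeeping right: whether $\vp{i}$ corresponds to $\ym{\vv_i}$ or to $\yp{\vv_i}$, and correspondingly whether the shared direction $\vp{i+1}$ plays the role of $\yp{\vv_i}$ or $\ym{\vv_i}$ relative to each of the two crooked planes meeting at it. If the assignment is off by a swap, the vertices land in the wrong cone and the corollaries do not apply. I would resolve this by carefully applying the sign normalization $\vv_i = \tfrac{-1}{\vp{i}\cdot\vp{i+1}}\vp{i}\boxtimes\vp{i+1}$ (the factor $-1/(\vp{i}\cdot\vp{i+1})$ is positive because consistent orientation forces $\vp{i}\cdot\vp{i+1} < 0$), and checking against the positively-oriented-basis condition $\ldot{(a\boxtimes b)}{c} > 0$. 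Once the dictionary is fixed consistently across all three arcs, the three applications of the corollaries are immediate.
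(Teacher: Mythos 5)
Your plan is correct, and it takes a genuinely different (though closely parallel) route from the paper's. The paper does not pass through Corollary~\ref{cor:seammoveII} at all: it verifies the three inequalities of Theorem~\ref{thm:AsCPCP} directly, pairing $p_2-p_1$ against $\vv_1$, $\vv_2$ and $\vp{1}\boxtimes\vp{2}$ and observing that all but one term dies each time (since $\vp{i},\vp{i+1}$ lie in $\vv_i^\perp$), the surviving term having a definite sign because $\Det(\vp{1},\vp{2},\vp{3})$ does; cyclic symmetry then handles the other two pairs. Your route, by contrast, front-loads all the work into the identification of null directions and then quotes the corollary, which is arguably cleaner and makes the cone-membership picture explicit. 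The bookkeeping you defer does come out right, and you should record it: since $\ldot{(\vp{i}\boxtimes\vp{i+1})}{\vv_i} = -\ldot{\vp{i}}{\vp{i+1}} > 0$, the triple $(\vp{i},\vp{i+1},\vv_i)$ is positively oriented, so $\ym{\vv_i}=\vp{i}$ and $\yp{\vv_i}=\vp{i+1}$ (exactly, not just up to scale, when $g_i$ is hyperbolic, and up to a harmless positive factor when $g_i$ is parabolic), whence $p_i = a_i\ym{\vv_i}-b_i\yp{\vv_i}$ is precisely the required cone form. One point to make explicit: with this identification the shared null direction of an adjacent pair is $\yp{\vv_i}=\ym{\vv_{i+1}}=\vp{i+1}$, which is the \emph{reverse} of the normalization $\ym{\vv_1}=\yp{\vv_2}$ assumed in Corollary~\ref{cor:seammoveII}, so you must apply the corollary with the roles of the two crooked planes exchanged — legitimate because disjointness is symmetric, but it should be said. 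Finally, your remark that the parabolic case needs the degenerate ($a_1=0$ or $b_2=0$) clause of the corollary is a red herring: the coefficients $a_i,b_i$ are positive by hypothesis regardless of whether $g_i$ is parabolic, and nothing in the argument changes in that case.
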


\begin{proof}
Each pair being asymptotic, we verify condition
\eqref{eq:asymptotic} in Theorem~\ref{thm:AsCPCP}. We check this
for $\CP(\vv_1,p_1)$ and $\CP(\vv_2,p_2)$; the other cases follow
from cyclic symmetry.

\begin{itemize}
\item $(p_2-p_1)\cdot\vv_1>0$:
\begin{align*}
(p_2-p_1)\cdot\vv_1& = \frac{-1}{\vp{1}\cdot\vp{2}}
(a_2\vp{2}-b_2\vp{3}-a_1\vp{1}+b_1\vp{2})\cdot
(\vp{1}\boxtimes\vp{2}) \\
& =\frac{b_2}{\vp{1}\cdot\vp{2}}\vp{3}\cdot
(\vp{1}\boxtimes\vp{2})>0.
\end{align*}
\item $(p_2-p_1)\cdot\vv_2>0$:
\begin{equation*}
(p_2-p_1)\cdot\vv_2 = \frac{a_1}{\vp{1}\cdot\vp{2}} \vp{1}\cdot
(\vp{2}\boxtimes\vp{3}) >0.
\end{equation*}
\item $(p_2-p_1)\cdot (\vp{1}\boxtimes\vp{2})>0$:
\begin{equation*}
(p_2-p_1)\cdot (\vp{1}\boxtimes\vp{2}) =-b_2\vp{3}\cdot
(\vp{1}\boxtimes\vp{2}) >0.
\end{equation*}
\end{itemize}
\end{proof}

\begin{proof}[Conclusion of proof of Theorem~A]
Consider the following four crooked planes:
\begin{align*}
&\CP(\vv_3,p_3),~\CP(g_1(\vv_3),p_1)\subset\H(\vv_1,p_1)\\
&\CP(\vv_2,p_2),~\CP(g_2^{-1}(\vv_2),p_1)\subset\H(\vv_1,p_1)
\end{align*}
Then apply Lemma~\ref{lem:kissing1} to obtain a
crooked fundamental domain  for the cocycle $u$ such that
$u(g_i)=p_i-p_{i-1}$, $i=1,2,3$.

Every positive cocycle arises in this way.
Indeed, compute the Margulis invariant for the above cocycle $u$:
\begin{eqnarray*}
\mu_1  &= &(p_1-p_3)\cdot\vo{1}\\
&=&(a_1\vp{1}-b_1\vp{2}-a_3\vp{3}+b_3\vp{1})
\cdot\frac{-(\vm{1}\boxtimes\vp{1})}{\vm{1}\cdot\vp{1}}\\
& = &(-b_1\vp{2}-a_3\vp{3})\cdot\vo{1}
\end{eqnarray*}
(Omit the second line when $g_1$ is parabolic.)

Recall that every product  $\beta_{i,j}= -\vp{i}\cdot\vo{j}>0$.
In matrix form:
\begin{equation*}
\begin{bmatrix} \mu_1 \\ \mu_2 \\ \mu_3 \end{bmatrix} =
\begin{bmatrix} 0 & \beta_{2,1} & 0 & 0  & \beta_{3,1}&  0 \\
\beta_{1,2}& 0 & 0 &\beta_{3,2}  & 0 & 0  \\
0 & 0 &\beta_{2,3} & 0 & 0 &  \beta_{1,3}\end{bmatrix}
\begin{bmatrix} a_1 \\ b_1 \\ a_2 \\ b_2 \\ a_3 \\ b_3
\end{bmatrix}.
\end{equation*}
and every positive triple of values $(\mu_1,\mu_2,\mu_3)$ may be
realized by choosing appropriate positive values of $a_i,b_i$.
Explicitly, for $i=1,2,3$, choose $p_i,q_i>0$ with $p_i + q_i=1$,
and define
\begin{equation*}
\begin{bmatrix} a_1 \\ b_1 \\ a_2 \\ b_2 \\ a_3 \\ b_3\end{bmatrix} \;=\;
\begin{bmatrix}
p_2\mu_2/\beta_{12} \\ q_1\mu_1/\beta_{21} \\
p_3\mu_3/\beta_{23} \\ q_2\mu_2/\beta_{32} \\
p_1\mu_1/\beta_{31} \\ q_3\mu_3/\beta_{13}
\end{bmatrix}.
\end{equation*}
The proof of Theorem~A is complete.\end{proof}

\section
{Embedding in an arithmetic group}
As an application, we construct examples of proper affine deformations
of a Fuchsian group as subgroups of the symplectic group $\Spfr$.

Consider a $4$-dimensional real symplectic vector space $S$ with a
lattice $S_\Z$ such that the symplectic form takes values $\Z$ on
$S_\Z$. Fix an {\em integral\/} Lagrangian $2$-plane $\Li\subset
S$, that is, a Lagrangian $2$-plane generated by $\Li\cap S_\Z$.
Our model for Minkowski space will be the space $\Lf$ of all
Lagrangian $2$-planes $L\subset S$  transverse to $\Li$. The
underlying Lorentzian vector space is the space of linear maps
$S/\Li\rightarrow \Li$ which are {\em self-adjoint\/} in the sense
described below. We denote by $\Aut(S)\cong \Spfr$ the group of
linear {\em symplectomorphisms\/} of $S$. We denote by
$\Aut(\Li)\cong\GLtwR$ the group of linear automorphisms of the
vector space $\Li$.

The set of two-dimensional subspaces $L\subset S$ transverse to
$\Li$ admits a simply transitive action of the vector space
$\Hom(S/\Li,\Li)$, as follows. Denote the inclusion and quotient
mappings by
\begin{equation*}
\Li \stackrel{\iota}\hookrightarrow S  \stackrel{\Pi}\twoheadrightarrow S/\Li
\end{equation*}
respectively. Let $L$ be a $2$-plane transverse to $\Li$ and
$\phi\in\Hom(S/\Li,\Li)$. Define the action $\phi\cdot L$ of
$\phi$ on $L$ as the
 {\em graph\/} of the composition
\begin{equation*}
L \xrightarrow{\Pi} S/\Li \xrightarrow{\phi} \Li
\stackrel{\iota}\hookrightarrow S,
\end{equation*}
that is,
\begin{equation*}
\phi\cdot L \;:=\; =  \{ v + \iota\circ\phi\circ\Pi(v) \mid v\in L \}.
\end{equation*}
The vector group $\Hom(S/\Li,\Li)\cong\R^4$ acts simply
transitively on the set of $2$-planes $L$ transverse to $\Li$ as
claimed.

Such a $2$-plane $L$ is Lagrangian if and only if the
corresponding linear map $\phi$ is {\em self-adjoint\/} as
follows. Since $S$ is 4-dimensional and $\Li\subset S$ is
Lagrangian, the symplectic structure on $S$ defines an isomorphism
of $S/\Li$ with the dual vector space $\Listar$. Let
$\phi\in\Hom(S/\Li,\Li)$ be a linear map. Its {\em transpose\/}
$\phi^\mathsf{T} \in\Hom\big(\Li^*,(S/\Li)^*\big)$  is the map
induced by $\phi$ on the dual spaces. Its {\em adjoint\/}
$\phi^*\in\Hom(S/\Li,\Li)$ is defined as the composition
\begin{equation}
S/\Li \xrightarrow{\cong} \Listar
\xrightarrow{\phi^\mathsf{T}}  (S/\Li)^*
\xrightarrow{\cong}  \Li
\end{equation}
and the isomorphisms above arise from duality between $S/\Li$ and
$\Li$. If $L\in \Lf$, and $\phi\in\Hom(S/\Li,\Li)$, then
$\phi\cdot L$ is Lagrangian if and only if $\phi = \phi^*$, that
is, $\phi$ is {\em self-adjoint.} In this case $\phi$ corresponds
to a symmetric bilinear form on $S/\Li$.

Let $\Phi\cong\R^3$ denote the vector space of such self-adjoint
elements $\phi$ of $\Hom(S/\Li,\Li)$. Then $\Lf$ is an affine
space with underlying vector space of translations $\Phi$.

Choose a fixed $\Lo\in\Lf$. The symplectic form defines a
nondegenerate bilinear form
\begin{equation*}
\Lo \times \Li \longrightarrow \R
\end{equation*}
under which $\Lo$ and $\Li$ are dual vector spaces and $S = \Li \oplus \Lo$.
The restriction $\Pi|_\Lo$ induces an isomorphism
\begin{equation*}
\Lo \xrightarrow{\cong} S/\Li.
\end{equation*}
Given $\phi\in\Phi$, a self-adjoint endomorphism of $\Hom(S/\Li,\Li)$,
the linear transformation of $S = \Lo \oplus \Li$ defined
by the exponential map
\begin{equation*}
U_\phi \;:=\;  \exp \big(0 \oplus \, (\phi\circ \Pi|_\Lo)\,  \big)
\end{equation*}
is a unipotent linear symplectomorphism of $S$ which:
\begin{itemize}
\item acts identically on $\Li$;
\item induces the identity on the quotient $S/\Li$.
\end{itemize}
Indeed, the exponential map is an isomorphism of the vector group
$\Phi$ onto the subgroup of the linear symplectomorphism group of
$S$ satisfying the above two properties.

 Every linear automorphism $A$ of $\Li$ extends to the
linear symplectomorphism of $S= \Li \oplus \Lo$:
\begin{equation*}
\sigma(A) \;:=\; A \oplus (A^\mathsf{T})^{-1} .
\end{equation*}
Such linear symplectomorphisms stabilize the Lagrangian subspaces
$\Li$ and $\Lo$, and the image of $\Aut(\Li)$ is characterized
by these properties.
In particular  $\Aut(\Li)$ normalizes the group $\exp(\Phi)$
corresponding to translations.
These two subgroups generate the subgroup of linear symplectomorphisms
of $S$ which stabilize $\Li$.

The vector space $\Phi$ has a natural {\em Lorentzian\/} structure
as follows. Identify $\Phi$  with the vector space $\St$ of
$2\times 2$ symmetric matrices. The bilinear form
\begin{align*}
\St \times \St & \longrightarrow \R \\
X\cdot Y &\;\longmapsto\; \frac{\tr\left(XY\right) - \tr(X)\tr(Y)}{2}
\end{align*}
is a Lorentzian inner product of signature $(2,1)$.
If $A\in\Aut(\Li)$, then
\begin{equation*}
A X \cdot A Y = (\det A)^2 X \cdot Y
\end{equation*}
so the subgroup $\SAut(\Li)$ of unimodular automorphisms acts
isometrically with  respect to this inner product. In this way
$\Lf$ is a model for Minkowski space and $\SAut(\Li)$ acts by
linear isometries.  In particular, $\exp(\Phi)$ corresponds to the
group of translations.

We describe this explicitly by matrices.
Consider $\R^4$ with standard basis vectors $\e_k$
for $1\leq k \leq 4$.
Endow $\R^4$ with the symplectic form such that:
\begin{align*}
 \omega\left( \e_1, \e_3\right) \;=\; - \omega\left( \e_3, \e_1\right) & \;=\; 1 \\
 \omega\left(  \e_2, \e_4\right) \;=\; -  \omega\left( \e_4, \e_2\right) & \;=\; 1
 \end{align*}
 and all other $\omega\left( \e_i, \e_j\right) = 0$.
That is,
\begin{equation*}
\omega(u, v) \;:=\; u^{\mathsf{T}} \mathbb{J} v
\end{equation*}
where
\begin{equation*}
\mathbb{J} \;:=\;
\bmatrix 0 & 0 & 1 & 0 \\
0 & 0 & 0 & 1 \\
-1 & 0 & 0 & 0 \\
0 & -1 & 0 & 0 \endbmatrix.
\end{equation*}
 Define the complementary pair of Lagrangian planes:
  \begin{align*}
 \Li & \;:=\; \langle \e_1 , \e_2 \rangle \\
 \Lo & \;:=\; \langle \e_3 , \e_4 \rangle.
\end{align*}
Thus $\left(\e_3,\e_4\right)$ is the basis of $\Lo$ dual to the basis
 $\left(\e_1,\e_2\right)$.

Vectors in Minkowski space correspond to self-adjoint linear
transformations $\Li \rightarrow \Lo \cong \Listar$, that is,
$2\times 2$ symmetric matrices as follows.
A symmetric matrix
\begin{equation*}
\psi(x,y,z) \;:=\; \bmatrix x & y \\ y & z \endbmatrix
\end{equation*}
corresponds to a vector in Minkowski space with quadratic form
\begin{equation*}
-\det(\psi) = x z  - y^2.
\end{equation*}
The unipotent symplectomorphism corresponding to a symmetric
matrix  $\psi(x,y,z)\in\St$  is:
\begin{equation*}
U_{\psi(x,y,z)} \;:=\;
\exp\left(
\bmatrix 0 & 0 & x & y \\
0 & 0 & y & z \\
0 & 0 & 0 & 0 \\
0 & 0 & 0 & 0 \endbmatrix \right)
\;=\;
\bmatrix 1 & 0 & x & y \\
0 & 1 & y & z \\
0 & 0 & 1 & 0 \\
0 & 0 & 0 & 1 \endbmatrix
\end{equation*}
where $x,y,z\;\in\;\R$.
These correspond to the translations of Minkowski space,
and comprise the subgroup $\U\subset\Spfr$.

The reductive subgroup $\Aut(\Li) \;\cong\; \GLtwR$ embeds in
$\Aut(S) \;\cong\; \Spfr$ as follows:
let
\begin{equation*}
A \;:=\; \bmatrix a & b \\ c & d \endbmatrix \;\in\;\  \GLtwR \;\cong\;
\Aut(\Li)
\end{equation*}
with determinant $\Delta \;:=\; \det(A).$
The corresponding linear symplectomorphism
preserving the decomposition $S \;=\; \Li \oplus \Lo$ is:
\begin{equation*}
\sigma(A) \;:=\;
\bmatrix  a & b & 0 & 0 \\
c & d & 0 & 0 \\
0 & 0 &   d/\Delta & -  c/\Delta   \\
0 & 0 & -b/\Delta &  a/\Delta \endbmatrix.
\end{equation*}
These correspond to linear conformal transformations of Minkowski
space. The subgroup $\SAut(\Li)$ of {\em unimodular\/}
automorphisms of $\Li$ corresponds to the group of linear isometries
of Minkowski space.

The subgroup of $\Aut(S)$ generated by $\U$ and $\SAut(\Li)$ is a
semidirect product $\U\rtimes\SAut(\Li)$ and acts by conjugation
on the normal subgroup $\U$. This action corresponds to the action
of the group of affine isometries of Minkowski space.

We construct subgroups of $\Spfz$ which act properly on the $\St$
model of $\E$. The linear parts and translational parts of
Lorentzian transformations of $\St$ are associated with elements
of $\Spfz$. The level two congruence subgroup $\Gamma_0$ of
$\SLtz$ is generated by
\begin{equation*}
g_1 \;:=\;-\begin{bmatrix} 1 & 2 \\ 0 & 1 \end{bmatrix} ,\;
g_2 \;:=\;-\begin{bmatrix} 1 & 0 \\ -2 & 1 \end{bmatrix} ,\;
g_3 \;:=\; \begin{bmatrix} -1 & 2 \\ -2 & 3 \end{bmatrix}.
\end{equation*}
subject to the relation $g_1g_2g_3 = \Id$. It is freely generated
by $g_1$ and $g_2$. All three $g_i$ are parabolic and the quotient
hyperbolic surface $\Sigma \;:=\; \Ht/\Gamma_0$ is a three-punctured sphere.
The symmetric matrices
\begin{equation*}
\vv_1 \;:=\; \bmatrix -2 & 0 \\ 0 & 0 \endbmatrix,\; \vv_2 \;:=\;
\bmatrix 0 & 0 \\ 0 & -2 \endbmatrix,\; \vv_3 \;:=\; \bmatrix -2 &
-2 \\ -2 & -2 \endbmatrix
\end{equation*}
define positive fixed vectors with respect to $g_1,g_2,g_3$ respectively.
The triple $(\vv_1,\vv_2,\vv_3)$ defines a decoration of $\Sigma$.

An affine deformation of $\Gamma_0$ is defined by two arbitrary vectors
$u_1,u_2\in\St$ as translational parts:
\begin{equation*}
u_1 \;:=\;\begin{bmatrix} a_1 & b_1 \\ b_1 & c_1 \end{bmatrix} ,\;
u_2 \;:=\;\begin{bmatrix} a_2 & b_2 \\ b_2 & c_2 \end{bmatrix}.
\end{equation*}
Thus the affine transformations with linear part $g_i$ and translational part $u_i$ are:
\begin{equation*}
\gamma_1 \;:=\;
\begin{bmatrix} 1 & 0 & a_1 & b_1 \\ 0 & 1 & b_1 & c_1 \\
0 & 0 & 1 & 0 \\ 0 & 0 & 0 & 1 \end{bmatrix}
\begin{bmatrix} -1 & -2 & 0 & 0 \\ 0 & -1 & 0 & 0 \\
0 & 0 & -1 & 0 \\ 0 & 0 & 2 & -1 \end{bmatrix}
\end{equation*}

\begin{equation*}
\gamma_2 \;:=\;
\begin{bmatrix} 1 & 0 & a_2 & b_2 \\ 0 & 1 & b_2 & c_2 \\
0 & 0 & 1 & 0 \\ 0 & 0 & 0 & 1 \end{bmatrix}
\begin{bmatrix} -1 & 0 & 0 & 0 \\ 2 & -1 & 0 & 0 \\
0 & 0 & -1 & -2 \\ 0 & 0 & 0 & -1 \end{bmatrix}
\end{equation*}
and
\begin{equation*}
\gamma_3 \;:=\;
\begin{bmatrix} 1 & 0 &  a_3  &b_3 \\
0 & 1 & b_3  &
c_3 \\
0 & 0 & 1 & 0 \\ 0 & 0 & 0 & 1 \end{bmatrix}
\begin{bmatrix} 1 & -2 & 0 & 0 \\ 2 & -3 & 0 & 0 \\
0 & 0 & -3 & -2 \\ 0 & 0 & 2 & 1 \end{bmatrix},
\end{equation*}
where $\gamma_3 \; =\; (\gamma_1 \gamma_2)^{-1}$,
 \begin{itemize}
\item $a_3 =-a_1-a_2+4 b_1 - 4 c_1$,
 \item $b_3 = -2 a_1 -2 a_2 + 7 b_1 - b_2 -6 c_1$, and
 \item $c_3 =-4 a_1 -4 a_2 + 12 b_1 -4 b_2 - 9 c_1 - c_2$.
 \end{itemize}
The corresponding Margulis invariants taken with respect to
$\vv_1,\vv_2,\vv_3$ are:
\begin{align*}
\mu_1 & =\; c_1 \\
\mu_2 & =\; a_2 \\
\mu_3 & =\; c_1 + c_2 - 2 b_1 + 2 b_2 + a_1 + a_2.
\end{align*}
By Theorem~A, the affine deformation $\Gamma :=
\langle \gamma_1,\gamma_2\rangle$ acts properly with
crooked fundamental domain whenever
\begin{align*}
\mu_1 & >\; 0 \\
\mu_2 & >\; 0 \\
\mu_3 & >\; 0.
\end{align*}
Furthermore, taking $a_1,b_1,c_1,a_2,b_2,c_2\;\in\; \Z$
implies $\Gamma\subset\Spfz$.

Here are some explicit examples. Consider, for example the slice
for translational conjugacy defined by $b_1 = b_2 = c_2 = 0$.
Choose three positive integers $\mu_1,\mu_2,\mu_3$.
Take
\begin{align*}
a_1 & =\; \mu_3 - \mu_1 -\mu_2\\
c_1 & =\; \mu_1 \\
a_2 & =\; \mu_2,
\end{align*}
that is, let
\begin{equation*}
\gamma_1 \;:=\;
\begin{bmatrix}
1 & 0 & \mu_3 - \mu_1 -\mu_2 & 0 \\ 0 & 1 & 0 & \mu_1 \\
0 & 0 & 1 & 0 \\ 0 & 0 & 0 & 1 \end{bmatrix}
\begin{bmatrix} -1 & -2 & 0 & 0 \\ 0 & -1 & 0 & 0 \\
0 & 0 & -1 & 0 \\ 0 & 0 & 2 & -1 \end{bmatrix}
\end{equation*}
and
\begin{equation*}
\gamma_2 \;:=\;
\begin{bmatrix} 1 & 0 & \mu_2 & 0 \\ 0 & 1 & 0 & 0 \\
0 & 0 & 1 & 0 \\ 0 & 0 & 0 & 1 \end{bmatrix}
\begin{bmatrix} -1 & 0 & 0 & 0 \\ 2 & -1 & 0 & 0 \\
0 & 0 & -1 & -2 \\ 0 & 0 & 0 & -1 \end{bmatrix}.
\end{equation*}
The proof of Theorem~B is complete.\qed

\appendix
\section*{Appendix. Proof of Lemma~\ref{lem:muiso}}

We return to the parametrization of the cohomology $\HH$ by the three
generalized Margulis invariants $\mu_1,\mu_2,\mu_3$ associated to the
respective generators $g_1,g_2,g_3$ associated to components of
$\partial\Sigma$. When $g_i$ is parabolic, choose a positive
vector $\vv_i$ to define $\mu_i$. We must show that
the triple $\mu = (\mu_1,\mu_2,\mu_3)$
defines an isomorphism
\begin{equation*}
\HH\longrightarrow\R^3.
\end{equation*}

%



Under the double covering $\SLtr \longmapsto \soto$, lift $\rho_0$ to a representation
$\pi\xrightarrow{\trho_0} \SLtr$. The condition that $\rho_0(\pi)$ is not solvable
implies that the representation $\trho_0$ on $\R^2$ is irreducible.
By a well-known classic theorem (see, for example, Goldman~\cite{G}),
such a representation is determined up to conjugacy by the three traces
$$
a_i \; := \; \tr\big( \trho_0(A_i)\big).
$$
and, choosing $b_3$ such that $b_3 + 1/b_3 = a_3$, we may conjugate $\trho_0$ to the
representation defined by:
\begin{align}\label{eq:Aslice}
\trho_0(A_1) &=\; \bmatrix a_1 & -1 \\ 1 & 0 \endbmatrix \notag \\
\trho_0(A_2) &=\; \bmatrix 0  & -b_3 \\ 1/b_3 & a_2 \endbmatrix \notag \\
\trho_0(A_2) &=\; \bmatrix b_3 & -a_1 c_3 + a_2  \\ 0 &  1/b_3  \endbmatrix.
\end{align}
\noindent
Since $\pi$ is freely generated by $A_1,A_2$,
a cocycle $\pi\xrightarrow{u}\V$ is completely
determined by two values $u(A_1),u(A_2)\in\V$.
Furthermore, since $\rho_0(\pi)$ is nonsolvable,
the coboundary map
$$
\V \xrightarrow{\partial} \ZZ
$$
is injective. Therefore the vector space
$\HH$ has dimension three.

To show that the linear map $\mu$ is an isomorphism,
it suffices to show that $\mu$ is onto. To this end,
it suffices to show that for each $i=1,2,3$ there
is a cocycle $u\in\ZZ$ such that $u(A_i) \neq 0$
and $u(A_j) = 0$ for $j\neq i$. By cyclic symmetry
it is only necessary to do this for $i=1$.

Under the local isomorphism $\SLtr \longmapsto \soto$, the Lie algebra
$\mathfrak{sl}(2,\R)$ maps to the Lie algebra $\mathfrak{so}(2,1)$ which
in turn maps isomorphically to the Lorentzian vector space $\V$.
(Compare \cite{GM,G0,CharetteDrummGoldman}.)
If $g\in\SLtr$ is hyperbolic or parabolic, then a neutral eigenvector $\vo(g)$ is
a nonzero multiple of the traceless projection
$$
\hat{g} := g - \frac{\tr(g)}2 \Id.
$$
Define a cocycle for the representation $\trho_0$ defined in \eqref{eq:Aslice} by:
\begin{align*}
u(A_1) &:=\; \bmatrix 1 & 0 \\ 0 & 0 \endbmatrix \\
u(A_2) &:=\; \bmatrix 0 & 0 \\ 0 & 0 \endbmatrix \\
u(A_3) &:=\; \bmatrix 0 & 0 \\ -1/c & 0 \endbmatrix.
\end{align*}
Then $\mu_1(u) \neq 0$ but $\mu_2(u)= \mu_3(u) =0$ as claimed.
The proof of Lemma~\ref{lem:muiso} is complete.\qed

\end{document}